\theoremstyle{plain}
\newtheorem{definition}{Definition}[section]
\newtheorem{thm}[definition]{Theorem}
\newtheorem{lem}[definition]{Lemma}
\newtheorem{cor}[definition]{Corollary}
\newtheorem{rem}[definition]{Remark}
\title{Integer group determinants for ${\rm C}_{4} \rtimes {\rm C}_{4}$}
\author{Yuka Yamaguchi and Naoya Yamaguchi}
\date{\today}
\begin{document}

\maketitle

\begin{abstract}
Let ${\rm C}_{4}$ be the cyclic group of order $4$. 
We determine all possible values of the integer group determinant of ${\rm C}_{4} \rtimes {\rm C}_{4}$. 
\end{abstract}

\section{Introduction}
For a finite group $G$, 
let $\Theta_{G}(x_{g})$ be the group determinant of $G$ and let 
$$
S(G) := \left\{ \Theta_{G}(x_{g}) \mid x_{g} \in \mathbb{Z} \right\}. 
$$
In this paper,  we determine $S ({\rm C}_{4} \rtimes {\rm C}_{4})$, where
\begin{align*}
{\rm C}_{4} \rtimes {\rm C}_{4} := \langle g_{1}, g_{2} \mid g_{1}^{4} = g_{2}^{4} = e, \: g_{2} g_{1} = g_{1}^{3} g_{2} \rangle = \{ g_{1}^{s} g_{2}^{t} \mid 0 \leq s, t \leq 3 \}. 
\end{align*}
Here, $e$ denotes the unit element. 

\begin{thm}\label{thm:1.1}
Let 
\begin{align*}
P &:= \left\{ p \mid p \equiv - 3 \: ({\rm mod} \: 8) \: \text{is a prime number} \right\}, \\ 
P' &:= \left\{ p \mid p \equiv - 1 \: \: ({\rm mod} \: 4) \: \: \text{is a prime number} \right\}, \\ 
A &:= \left\{ (8 k - 3) (8 l - 3) (4 m - 1)^{2} \mid k, m \in \mathbb{Z}, \: m \geq 1, \: 8 l - 3 \in P, \: k + l \equiv m \: \: ({\rm mod} \: 2) \right\} \\ 
&\subsetneq \{ 16 m - 7 \mid m \in \mathbb{Z} \}. 
\end{align*}
Then we have 
\begin{align*}
S({\rm C}_{4} \rtimes {\rm C}_{4}) = \left\{ 16 m + 1, \: 2^{14} p (2m + 1), \: 2^{14} q^{2} (2 m + 1), \: 2^{15} m \mid m \in \mathbb{Z}, \: p \in P, \: q \in P' \right\} \cup A. 
\end{align*}
\end{thm}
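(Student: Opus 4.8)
\emph{Overview and Step 1 (representation theory and factorization).} The plan is to make the Frobenius factorization of $\Theta_{G}$ for $G=\mathrm{C}_{4}\rtimes\mathrm{C}_{4}$ completely explicit, reduce the problem to a Diophantine statement about that factored form, and then settle the two inclusions separately. First one checks that $[G,G]=\langle g_{1}^{2}\rangle\cong\mathrm{C}_{2}$, so $G^{\mathrm{ab}}\cong\mathrm{C}_{2}\times\mathrm{C}_{4}$ yields eight linear characters, and the remaining $16-8=8$ forces two irreducible $2$-dimensional representations $\rho_{1},\rho_{2}$, each induced from a character $\psi$ of the abelian index-$2$ subgroup $H=\langle g_{1},g_{2}^{2}\rangle\cong\mathrm{C}_{4}\times\mathrm{C}_{2}$ with $\psi(g_{1})=\pm i$. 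Since $Z(G)=\langle g_{1}^{2},g_{2}^{2}\rangle\cong\mathrm{C}_{2}^{2}$, a short computation shows that $\rho_{1}$ (the one with $\psi(g_{2}^{2})=1$) factors through $G/\langle g_{2}^{2}\rangle\cong\mathrm{D}_{8}$ and $\rho_{2}$ (with $\psi(g_{2}^{2})=-1$) factors through $G/\langle g_{1}^{2}g_{2}^{2}\rangle\cong\mathrm{Q}_{8}$. Writing out the induced matrices and grouping the $x_{g}$ into the linear combinations that occur, Frobenius' theorem gives
\[
\Theta_{G}(x_{g}) = E\cdot F\cdot D^{2}\cdot Q^{2},
\]
where $E$ is the product of the four linear characters of $G$ trivial on $g_{2}^{2}$ (equivalently the linear part of $\Theta_{\mathrm{D}_{8}}=\Theta_{\mathrm{Q}_{8}}$ after pull-back), $F$ is the product of the other four, $D=|A_{1}|^{2}-|B_{1}|^{2}$ is the single $2$-dimensional factor of $\Theta_{\mathrm{D}_{8}}$ (a difference of two sums of two squares), and $Q=|A_{2}|^{2}+|B_{2}|^{2}$ is the single $2$-dimensional factor of $\Theta_{\mathrm{Q}_{8}}$ (a sum of four squares), with $A_{j},B_{j}$ explicit $\mathbb{Z}[i]$-linear combinations of the $x_{g}$. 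A crucial preliminary is to record exactly which joint parities (and $2$-adic valuations) of $E,F,A_{j},B_{j}$ can occur as the $x_{g}$ range over $\mathbb{Z}$.

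\emph{Step 2 (the inclusion $\supseteq$).} For each family on the right-hand side I would exhibit explicit integer vectors: $x_{g}=\delta_{g,e}$ together with small perturbations for $16m+1$; vectors supported on a single coset of $H$, suitably scaled, for $2^{15}m$; and, for $2^{14}p(2m+1)$, $2^{14}q^{2}(2m+1)$ and the members of $A$, vectors chosen so that the classical one- and two-square representations of primes $p\equiv -3\pmod 8$ and $q\equiv -1\pmod 4$ feed into $D$ and $Q$ through the Gaussian-integer norms. This is where $P$, $P'$ and the $\mathbb{Z}[i]$-structure enter, and where the parity condition $k+l\equiv m\pmod 2$ defining $A$ has to be matched to a parity constraint among the linear combinations of Step~1; I would also verify $A\subsetneq\{16m-7\}$ directly.

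\emph{Step 3 (the inclusion $\subseteq$).} Since $G$ is a $2$-group, $\Theta_{G}(x_{g})\equiv(\sum_{g}x_{g})^{16}\pmod 2$, giving two cases. If $\sum_{g}x_{g}$ is odd, every factor of $\Theta_{G}$ is odd; using that an odd value of $\Theta_{\mathrm{C}_{4}}$ lies in a fixed residue band modulo $8$, that $D^{2},Q^{2}\equiv 1\pmod 8$, and that a sum of two squares is $\equiv 0,1,2\pmod 4$, I would show $\Theta_{G}\equiv 1\pmod{16}$ unless a specific degenerate parity pattern among the $x_{g}$ holds, and that in that degenerate case the sum-of-two-squares theorem forces $\Theta_{G}=(8k-3)(8l-3)(4m-1)^{2}$ with $8l-3$ prime and $k+l\equiv m\pmod 2$, i.e.\ $\Theta_{G}\in A$. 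If $\sum_{g}x_{g}$ is even, I would prove $\Theta_{G}\equiv 0\pmod{2^{14}}$ by combining the forced $2$-divisibilities of $E$, $F$, $D$ and $Q$ (here the known determinations of $S(\mathrm{D}_{8})$, $S(\mathrm{Q}_{8})$ and $S(\mathrm{C}_{2}\times\mathrm{C}_{4})$ carry most of the bookkeeping), and then show that when $v_{2}(\Theta_{G})=14$ the surviving odd part is a product of at most one unsquared and one squared odd factor, whose odd prime divisors are again controlled by the $\mathbb{Z}[i]$-norms, producing the shapes $p(2m+1)$ and $q^{2}(2m+1)$, while $v_{2}(\Theta_{G})\geq 15$ yields exactly $2^{15}\mathbb{Z}$. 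Combining the two inclusions gives the theorem.

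\emph{Main obstacle.} The hard part is Step~3: the modulo-$16$ dichotomy in the odd case — pinning down precisely when the value is $\equiv 9\pmod{16}$ and showing it then has the exact product form of $A$, parity relation $k+l\equiv m\pmod 2$ included — and the sharp bound $v_{2}(\Theta_{G})\geq 14$ together with the identification of the odd part in the even case. Both require tracking the joint parities and higher $2$-adic data of all the linear combinations from Step~1 and feeding them into the arithmetic of sums of two squares; the factorization through $\Theta_{\mathrm{D}_{8}}$, $\Theta_{\mathrm{Q}_{8}}$ and $\Theta_{\mathrm{C}_{2}\times\mathrm{C}_{4}}$ organizes this, but does not by itself resolve the gluing of the three pieces.
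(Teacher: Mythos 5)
Your outline follows essentially the same route as the paper: the Frobenius factorization of $\Theta_{G}$ into the eight linear factors (which the paper organizes as a product $D_{4}(\bm{b})D_{4}(\bm{c})$ of two $\Theta_{\mathrm{C}_{4}}$-type factors rather than by restriction to $g_{2}^{2}$) times the squares of a $D_{8}$-type difference-of-norms factor and a $Q_{8}$-type sum-of-four-squares factor, explicit integer vectors for attainability, and a mod-$16$ and $2$-adic-valuation analysis combined with sum-of-two-squares arithmetic for the exclusions. Your plan is correct in structure, but what you flag as the main obstacle is exactly where the paper's actual work lies (Sections 3 and 4, which import several congruence lemmas from the authors' earlier paper on ${\rm C}_{2}^{2}\rtimes{\rm C}_{4}$ to pin down the residues $1-8d$ and $9-8d^{*}$ modulo $16$ and the sharp bound $2^{14}$), so the decisive computations remain to be carried out.
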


We recall the definition of the group determinant. 
For a finite group $G$, 
let $x_{g}$ be a variable for each $g \in G$ and let $\mathbb{Z}[x_{g}]$ be the multivariate polynomial ring in $x_{g}$ over $\mathbb{Z}$. 
The group determinant $\Theta_{G}(x_{g})$ of $G$ was defined by Dedekind as follows: 
$$
\Theta_{G}(x_{g}) := \det{\left( x_{g h^{- 1}} \right)_{g, h \in G}} \in \mathbb{Z}[x_{g}]. 
$$
The irreducible factorization of $\Theta_{G}(x_{g})$ over $\mathbb{C}$ was given by Frobenius \cite{Frobenius1968gruppen}: 
{\it Let $G$ be a finite group and let $\widehat{G}$ be a complete set of representatives of the equivalence classes of irreducible representations of $G$ over $\mathbb{C}$. Then 
$$
\Theta_{G}(x_{g}) = \prod_{\varphi \in \widehat{G}} \det{\left( \sum_{g \in G} \varphi(g) x_{g} \right)^{\deg{\varphi}}}, 
$$
where $\deg{\varphi}$ is the degree of $\varphi$.}
This is called Frobenius' theorem. 

A group determinant called an integer group determinant when its variables are integers. 
At the meeting of the American Mathematical Society in Hayward, California, in April 1977, 
Olga Taussky-Todd \cite{Olga} asked whether one could characterize the values of the integer group determinant \cite[p.1]{https://doi.org/10.48550/arxiv.2302.11688}. 
For some groups, $S(G)$ was determined in \cite{MR3879399, MR2914452, MR624127, https://doi.org/10.48550/arxiv.2205.12439, MR601702, MR550657, MR4363104, MR4313424, https://doi.org/10.48550/arxiv.2211.09930, https://doi.org/10.48550/arxiv.2302.11688, MR3998922, MR4056860, https://doi.org/10.48550/arxiv.2203.14420, Yamaguchi, https://doi.org/10.48550/arxiv.2211.14761, https://doi.org/10.48550/arxiv.2209.12446, https://doi.org/10.48550/arxiv.2211.01597, arXiv:2303.08489}. 
As a result, for every group $G$ of order at most $15$, a complete description was obtained for $S(G)$. 
Also, for all five abelian groups of order $16$, and the non-abelian groups ${\rm D}_{16}, {\rm D}_{8} \times {\rm C}_{2}, {\rm Q}_{8} \times {\rm C}_{2}, {\rm Q}_{16}$, ${\rm C}_{2}^{2} \rtimes {\rm C}_{4}$, 
$S(G)$ have been determined. 
There are fourteen groups of order $16$ up to isomorphism \cite{MR1510814, MR1505615}. 
Theorem~$\ref{thm:1.1}$ leaves three unresolved groups of order $16$.

\section{Preliminaries}
For any $\overline{r} \in {\rm C}_{4}$ with $r \in \{ 0, 1, 2, 3 \}$, 
we denote the variable $x_{\overline{r}}$ by $x_{r}$, 
and let 
$$
D_{4}(x_{0}, x_{1}, x_{2}, x_{3}) := \det{\left( x_{g h^{- 1}} \right)}_{g, h \in {\rm C}_{4}}. 
$$
For any $(\overline{r}, \overline{s}) \in {\rm C}_{4} \times {\rm C}_{2}$ with $r \in \{ 0, 1, 2, 3 \}$ and $s \in \{ 0, 1 \}$, 
we denote the variable $y_{(\overline{r}, \overline{s})}$ by $y_{j}$, where $j := r + 4 s$, 
and let 
$$
D_{4 \times 2}(y_{0}, y_{1}, \ldots, y_{7}) := \det{\left( y_{g h^{- 1}} \right)}_{g, h \in {\rm C}_{4} \times {\rm C}_{2}}. 
$$
From the $H = {\rm C}_{4}$ and $K = {\rm C}_{2}$ case of \cite[Theorem~1.1]{https://doi.org/10.48550/arxiv.2202.06952}, 
we have the following corollary. 

\begin{cor}\label{cor:2.1}
We have 
$$
D_{4 \times 2}(y_{0}, y_{1}, \ldots, y_{7}) = D_{4}(y_{0} + y_{4}, y_{1} + y_{5}, y_{2} + y_{6}, y_{3} + y_{7}) D_{4}(y_{0} - y_{4}, y_{1} - y_{5}, y_{2} - y_{6}, y_{3} - y_{7}). 
$$
\end{cor}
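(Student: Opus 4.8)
The plan is to put the group matrix of ${\rm C}_{4} \times {\rm C}_{2}$ into a $2 \times 2$ block form and then apply a standard determinant identity. First I would order the elements of ${\rm C}_{4} \times {\rm C}_{2}$ by the index $j = r + 4s$ appearing in the definition of $D_{4 \times 2}$, i.e.\ those with second coordinate $\overline{0}$ first and those with second coordinate $\overline{1}$ last. Writing $g = (\overline{r}, \overline{s})$ and $h = (\overline{r'}, \overline{s'})$, one has $g h^{-1} = (\overline{r - r'}, \overline{s - s'})$, and $\overline{s - s'} = \overline{0}$ precisely on the two diagonal $4 \times 4$ blocks. Hence, with this ordering,
\begin{align*}
\left( y_{g h^{-1}} \right)_{g, h \in {\rm C}_{4} \times {\rm C}_{2}} = \begin{pmatrix} A & B \\ B & A \end{pmatrix},
\end{align*}
where $A := \left( y_{\,\overline{r - r'}\,} \right)_{r, r'}$ is the group matrix of ${\rm C}_{4}$ in the variables $y_{0}, y_{1}, y_{2}, y_{3}$ and $B := \left( y_{\,\overline{r - r'} + 4} \right)_{r, r'}$ is the group matrix of ${\rm C}_{4}$ in the variables $y_{4}, y_{5}, y_{6}, y_{7}$.

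Next I would use the identity $\det \begin{pmatrix} A & B \\ B & A \end{pmatrix} = \det(A + B) \det(A - B)$, valid for arbitrary square matrices $A, B$ of the same size over a commutative ring: conjugating by $\begin{pmatrix} I & I \\ I & -I \end{pmatrix}$ (with inverse $\tfrac{1}{2} \begin{pmatrix} I & I \\ I & -I \end{pmatrix}$) block-diagonalizes the matrix to $\begin{pmatrix} A + B & 0 \\ 0 & A - B \end{pmatrix}$, and the determinants of the two conjugating factors are reciprocal and cancel. Since $A$ and $B$ are group matrices of ${\rm C}_{4}$, the sum $A + B$ is the group matrix of ${\rm C}_{4}$ in the variables $y_{r} + y_{r+4}$ and $A - B$ is the group matrix of ${\rm C}_{4}$ in the variables $y_{r} - y_{r+4}$. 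Therefore $\det(A + B) = D_{4}(y_{0} + y_{4}, y_{1} + y_{5}, y_{2} + y_{6}, y_{3} + y_{7})$ and $\det(A - B) = D_{4}(y_{0} - y_{4}, y_{1} - y_{5}, y_{2} - y_{6}, y_{3} - y_{7})$, and combining with the block identity yields the claimed factorization of $D_{4 \times 2}$.

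The only delicate point is the bookkeeping in the first step, namely checking that the chosen ordering of ${\rm C}_{4} \times {\rm C}_{2}$ really produces the clean pattern $\begin{pmatrix} A & B \\ B & A \end{pmatrix}$ with $A, B$ as described; this is routine, and in any case the value of the determinant is invariant under any simultaneous permutation of rows and columns, so the particular ordering is immaterial. Consequently I do not expect a genuine obstacle. Indeed, the statement is exactly the $H = {\rm C}_{4}$, $K = {\rm C}_{2}$ specialization of \cite[Theorem~1.1]{https://doi.org/10.48550/arxiv.2202.06952}, so one may also simply invoke that result. As a sanity check, the same factorization drops out of Frobenius' theorem: the irreducible characters of ${\rm C}_{4} \times {\rm C}_{2}$ are the products $\chi \otimes \psi$ with $\chi \in \widehat{{\rm C}_{4}}$ and $\psi \in \widehat{{\rm C}_{2}}$, and grouping the linear factors $\sum_{r, s} \chi(\overline{r}) \psi(\overline{s})\, y_{r + 4s}$ according to whether $\psi$ is trivial or the sign character recovers $D_{4}(y_{0} + y_{4}, \ldots)$ and $D_{4}(y_{0} - y_{4}, \ldots)$ respectively.
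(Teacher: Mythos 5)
Your argument is correct, and it does more than the paper does at this point: the paper offers no proof of Corollary~\ref{cor:2.1} at all, but simply invokes the $H={\rm C}_{4}$, $K={\rm C}_{2}$ case of the cited general factorization theorem for group determinants of direct products $H\times K$ (a route you also mention at the end). Your main line of reasoning is a self-contained elementary proof: the ordering $j=r+4s$ does put the group matrix of ${\rm C}_{4}\times{\rm C}_{2}$ into the form $\left(\begin{smallmatrix}A&B\\B&A\end{smallmatrix}\right)$ with $A$, $B$ the ${\rm C}_{4}$ group matrices in $y_{0},\ldots,y_{3}$ and $y_{4},\ldots,y_{7}$ respectively, and the identity $\det\left(\begin{smallmatrix}A&B\\B&A\end{smallmatrix}\right)=\det(A+B)\det(A-B)$ is valid for arbitrary equal-size blocks over a commutative ring (your conjugation argument works after inverting $2$, and the identity then descends to $\mathbb{Z}[y_{0},\ldots,y_{7}]$ since it is a polynomial identity; alternatively one can add the second block column to the first and subtract the first block row from the second to get a block triangular matrix without ever dividing by $2$). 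What the paper's citation buys is brevity and uniformity with its other references; what your argument buys is independence from the external result and an explicit view of why the factorization happens, with the Frobenius-theorem cross-check giving the same splitting of linear factors according to the character of the ${\rm C}_{2}$ component. No gaps.
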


Let $G := C_{4} \rtimes C_{4}$. 
For any $g = g_{1}^{s} g_{2}^{t} \in G \: (0 \leq s, t \leq 3)$, 
we denote the variable $z_{g}$ by $z_{j}$, where $j := t + 4 s$, 
and let 
$$
D_{G}(z_{0}, z_{1}, \ldots, z_{15}) := \det{(z_{g h^{- 1}})_{g, h \in G}}. 
$$
Let $[x] := \max{ \{ n \in \mathbb{Z} \mid n \leq x \} }$ and $\widehat{G} := \left\{ \varphi_{k} \mid 0 \leq k \leq 9 \right\}$, 
where 
\begin{align*}
\varphi_{k}(g_{1}) := (- 1)^{[ \frac{k}{4} ]}, \quad \varphi_{k}(g_{2}) := \sqrt{- 1}^{k}
\end{align*}
for any $0 \leq k \leq 7$ and 
\begin{align*}
\varphi_{k}(g_{1}) := (- 1)^{k} \begin{pmatrix} 0 & - 1 \\ 1 & 0 \end{pmatrix}, \quad
\varphi_{k}(g_{2}) := \sqrt{- 1}^{k} \begin{pmatrix} 1 & 0 \\ 0 & - 1 \end{pmatrix} 
\end{align*}
for any $8 \leq k \leq 9$. 
Then, $\widehat{G}$ is a complete set of representatives of the equivalence classes of irreducible representations of $G$ over $\mathbb{C}$. 
Let $F$ be the eight variable polynomial defined by 
$$
F(w_{0}, w_{1}, \ldots, w_{7}) := f_{0}(w_{0} - w_{2}, w_{4} - w_{6}, w_{1} - w_{3}, w_{5} - w_{7}) f_{1}(w_{0} + w_{2}, w_{4} + w_{6}, w_{1} + w_{3}, w_{5} + w_{7}),
$$
where $f_{k}(x, y, z, w) := x^{2} + y^{2} + (- 1)^{k} z^{2} + (- 1)^{k} w^{2} \: (0 \leq k \leq 1)$. 
Then we have the following. 

\begin{lem}\label{lem:2.2}
We have 
$$
D_{G}(z_{0}, z_{1}, \ldots, z_{15}) = D_{4 \times 2}(z_{0} + z_{8}, z_{1} + z_{9}, \ldots, z_{7} + z_{15}) F(z_{0} - z_{8}, z_{1} - z_{9}, \ldots, z_{7} - z_{15})^{2}. 
$$
\end{lem}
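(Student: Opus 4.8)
The plan is to apply Frobenius' theorem to $G$ using the explicit set $\widehat{G} = \{\varphi_{k} \mid 0 \le k \le 9\}$ recorded above, and then to separate the product into the contributions of the eight one-dimensional representations and the two two-dimensional ones. Writing $M_{k} := \sum_{g \in G} \varphi_{k}(g)\, z_{g}$ (a scalar for $0 \le k \le 7$, a $2 \times 2$ matrix for $k \in \{8,9\}$), Frobenius' theorem gives
\begin{align*}
D_{G}(z_{0}, \ldots, z_{15}) = \left( \prod_{k=0}^{7} M_{k} \right) \left( \prod_{k=8}^{9} \det M_{k} \right)^{2}.
\end{align*}
The structural fact driving the proof is that $g_{1}^{2}$ is central of order $2$ in $G$ (immediate from $g_{2} g_{1} = g_{1}^{3} g_{2}$), together with $\varphi_{k}(g_{1}^{2}) = 1$ for $0 \le k \le 7$ and $\varphi_{k}(g_{1}^{2}) = -I$ for $k \in \{8,9\}$. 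Hence the first product will depend only on the sums $z_{j} + z_{j+8}$ and the second only on the differences $z_{j} - z_{j+8}$ for $0 \le j \le 7$.

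For $0 \le k \le 7$ I would use $\varphi_{k}(g_{1}^{s+2} g_{2}^{t}) = \varphi_{k}(g_{1}^{s} g_{2}^{t})$ to pair the term indexed by $g_{1}^{s} g_{2}^{t}$ with the one indexed by $g_{1}^{s+2} g_{2}^{t}$, so that $M_{k}$ becomes a character sum over $\overline{G} := G / \langle g_{1}^{2} \rangle$ in the variables $z_{j} + z_{j+8}$. Since the images of $g_{1}$ and $g_{2}$ commute in $\overline{G}$ and have orders $2$ and $4$, one gets $\overline{G} \cong {\rm C}_{4} \times {\rm C}_{2}$ via $\overline{g_{1}^{s} g_{2}^{t}} \mapsto (\overline{g_{2}^{t}}, \overline{g_{1}^{s}})$, and under this isomorphism the preimage-sum attached to $\overline{g_{1}^{s} g_{2}^{t}}$ is exactly $z_{j} + z_{j+8}$ with $j = t + 4s$, which matches the variable indexing used to define $D_{4 \times 2}$. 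As $k$ runs over $0, \ldots, 7$, the characters $\varphi_{k}$ descend to all eight irreducible characters of ${\rm C}_{4} \times {\rm C}_{2}$, so $\prod_{k=0}^{7} M_{k}$ reproduces the Frobenius factorization of $D_{4 \times 2}$ with $y_{j} = z_{j} + z_{j+8}$; hence $\prod_{k=0}^{7} M_{k} = D_{4 \times 2}(z_{0} + z_{8}, \ldots, z_{7} + z_{15})$.

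For $k \in \{8,9\}$, from $\varphi_{k}(g_{1}^{s+2} g_{2}^{t}) = -\varphi_{k}(g_{1}^{s} g_{2}^{t})$ the matrix $M_{k}$ depends only on $w_{j} := z_{j} - z_{j+8}$ and equals $\sum_{s=0}^{1} \sum_{t=0}^{3} w_{t+4s}\, \varphi_{k}(g_{1})^{s} \varphi_{k}(g_{2})^{t}$. Here $\varphi_{k}(g_{2})^{t}$ is diagonal and $\varphi_{k}(g_{1})$ is anti-diagonal, so $M_{k}$ has a simple $2 \times 2$ shape and a direct computation gives
\begin{align*}
\det M_{8} &= (w_{0} + w_{2})^{2} + (w_{4} + w_{6})^{2} - (w_{1} + w_{3})^{2} - (w_{5} + w_{7})^{2} = f_{1}(w_{0}+w_{2}, w_{4}+w_{6}, w_{1}+w_{3}, w_{5}+w_{7}), \\
\det M_{9} &= (w_{0} - w_{2})^{2} + (w_{4} - w_{6})^{2} + (w_{1} - w_{3})^{2} + (w_{5} - w_{7})^{2} = f_{0}(w_{0}-w_{2}, w_{4}-w_{6}, w_{1}-w_{3}, w_{5}-w_{7}).
\end{align*}
Multiplying these and recalling the definition of $F$, we get $\prod_{k=8}^{9} \det M_{k} = F(w_{0}, \ldots, w_{7}) = F(z_{0} - z_{8}, \ldots, z_{7} - z_{15})$. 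Substituting the two products back into the Frobenius identity above yields the lemma.

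The $2 \times 2$ determinant evaluation in the last step is routine. The part needing the most care is the bookkeeping in the one-dimensional contribution: one must check not merely that $\overline{G} \cong {\rm C}_{4} \times {\rm C}_{2}$, but that the generators occur in the order making the collapsed character sums line up with the exact variable indexing of $D_{4 \times 2}$ fixed in Section~2, so that the product over $k = 0, \ldots, 7$ is $D_{4 \times 2}$ on the nose rather than a relabelled copy of it. (That $\varphi_{8}, \varphi_{9}$ are well-defined, irreducible and inequivalent, and that $\widehat{G}$ accounts for all of $G$, are already recorded in the setup and can be cited.)
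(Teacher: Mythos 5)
Your proposal is correct and follows essentially the same route as the paper: apply Frobenius' theorem, collapse the eight one-dimensional factors along the central element $g_{1}^{2}$ to recover $D_{4\times 2}$ in the variables $z_{j}+z_{j+8}$, and evaluate the two $2\times 2$ determinants in the variables $z_{j}-z_{j+8}$ to get $f_{1}$ and $f_{0}$ (your stated values of $\det M_{8}$ and $\det M_{9}$ agree with the paper's). The only difference is presentational — you organize the index pairing via the quotient $G/\langle g_{1}^{2}\rangle$ where the paper does the same manipulation directly on the summation indices.
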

\begin{proof}
From Frobenius' theorem, we have 
\begin{align*}
D_{G}(z_{0}, z_{1}, \ldots, z_{15}) = \prod_{\varphi \in \widehat{G}} \det{\left( \sum_{g \in G} \varphi(g) z_{g} \right)}^{\deg{\varphi}} = \prod_{k = 0}^{9} \det{M_{k}}^{\deg{\varphi_{k}}}, 
\end{align*}
where $M_{k} := \sum_{s = 0}^{3} \sum_{t = 0}^{3} \varphi_{k}(g_{1}^{s} g_{2}^{t}) z_{t + 4 s}$. 
Note that $\{ \chi_{k} \mid 0 \leq k \leq 7 \}$ is the dual group of ${\rm C}_{4} \times {\rm C}_{2}$, where 
$\chi_{k}( (\overline{1}, \overline{0}) ) := \sqrt{- 1}^{k}$ and $\chi_{k}( (\overline{0}, \overline{1}) ) := (- 1)^{[\frac{k}{4}]}$ for $0 \leq k \leq 7$. 
Then we have 
\begin{align*}
\prod_{k = 0}^{7} \det{M_{k}}^{\deg{\varphi_{k}}} 
&= \prod_{k = 0}^{7} M_{k} \\ 
&= \prod_{k = 0}^{7} \sum_{s = 0}^{3} \sum_{t = 0}^{3} (- 1)^{[\frac{k}{4}] s} \sqrt{- 1}^{k t} z_{t + 4 s} \\ 
&= \prod_{k = 0}^{7} \sum_{t = 0}^{3} \sum_{s = 0}^{1} \sqrt{- 1}^{k t} (- 1)^{[\frac{k}{4}] s} (z_{t + 4 s} + z_{t + 4 (s + 2)}) \\
&= \prod_{k = 0}^{7} \sum_{t = 0}^{3} \sum_{s = 0}^{1} \chi_{k}( (\overline{t}, \overline{s}) ) (z_{t + 4 s} + z_{t + 4 s + 8}) \\ 
&= D_{4 \times 2}(z_{0} + z_{8}, z_{1} + z_{9}, \ldots, z_{7} + z_{15}). 
\end{align*}
Also, for any $8 \leq k \leq 9$, we have 
\begin{align*}
M_{k} 
&= \sum_{s = 0}^{3} \sum_{t = 0}^{3} (- 1)^{k s} \sqrt{- 1}^{k t} z_{t + 4 s} 
\begin{pmatrix} 0 & - 1 \\ 1 & 0 \end{pmatrix}^{s} \begin{pmatrix} 1 & 0 \\ 0 & - 1 \end{pmatrix}^{t} \\ 
&= \sum_{t = 0}^{3} \sum_{s = 0}^{1} \sqrt{- 1}^{k t} (- 1)^{k s} (z_{t + 4 s} - z_{t + 4 s + 8}) 
\begin{pmatrix} 0 & - 1 \\ 1 & 0 \end{pmatrix}^{s} \begin{pmatrix} 1 & 0 \\ 0 & - 1 \end{pmatrix}^{t} \\ 
&= \sum_{t = 0}^{3} \left\{ \sqrt{- 1}^{k t} (z_{t} - z_{t + 8}) \begin{pmatrix} 1 & 0 \\ 0 & (- 1)^{t} \end{pmatrix} 
+ \sqrt{- 1}^{k t} (- 1)^{k} (z_{t + 4} - z_{t + 12}) \begin{pmatrix} 0 & (- 1)^{t + 1} \\ 1 & 0 \end{pmatrix} \right\} \\ 
&= 
\begin{pmatrix} 
\sum_{t = 0}^{3} \sqrt{- 1}^{k t} \tilde{z}_{t} & \sum_{t = 0}^{3} (- 1)^{k + t + 1} \sqrt{- 1}^{k t} \tilde{z}_{t + 4} \\ 
\sum_{t = 0}^{3} (- 1)^{k} \sqrt{- 1}^{k t} \tilde{z}_{t + 4} & \sum_{t = 0}^{3} (- 1)^{t} \sqrt{- 1}^{k t} \tilde{z}_{t} 
\end{pmatrix}, 
\end{align*}
where $\tilde{z}_{t} := z_{t} - z_{t + 8}$ for $0 \leq t \leq 7$. 
Therefore, 
\begin{align*}
\det{M_{k}} 
&= \sum_{s = 0}^{3} \sqrt{- 1}^{k s} \tilde{z}_{s} \sum_{t = 0}^{3} (- 1)^{t} \sqrt{- 1}^{k t} \tilde{z}_{t} 
+ \sum_{s = 0}^{3} \sqrt{- 1}^{k s} \tilde{z}_{s + 4} \sum_{t = 0}^{3} (- 1)^{t} \sqrt{- 1}^{k t} \tilde{z}_{t + 4} \\ 
&= \left\{ \tilde{z}_{0} + (- 1)^{k} \tilde{z}_{2} \right\}^{2} - (- 1)^{k} \left\{ \tilde{z}_{1} + (- 1)^{k} \tilde{z}_{3} \right\}^{2} \\ 
&\qquad + \left\{ \tilde{z}_{4} + (- 1)^{k} \tilde{z}_{6} \right\}^{2} - (- 1)^{k} \left\{ \tilde{z}_{5} + (- 1)^{k} \tilde{z}_{7} \right\}^{2}. 
\end{align*}
That is, 
\begin{align*}
\det{M_{8}} = f_{1}(\tilde{z}_{0} + \tilde{z}_{2}, \tilde{z}_{4} + \tilde{z}_{6}, \tilde{z}_{1} + \tilde{z}_{3}, \tilde{z}_{5} + \tilde{z}_{7}), \quad 
\det{M_{9}} = f_{0}(\tilde{z}_{0} - \tilde{z}_{2}, \tilde{z}_{4} - \tilde{z}_{6}, \tilde{z}_{1} - \tilde{z}_{3}, \tilde{z}_{5} - \tilde{z}_{7}). 
\end{align*} 
This completes the proof. 
\end{proof}

Throughout this paper, 
we assume that $a_{0}, a_{1}, \ldots, a_{15} \in \mathbb{Z}$ and let 
\begin{align*}
&&b_{i} &:= (a_{i} + a_{i + 8}) + (a_{i + 4} + a_{i + 12}) && (0 \leq i \leq 3), \\
&&c_{i} &:= (a_{i} + a_{i + 8}) - (a_{i + 4} + a_{i + 12}) && (0 \leq i \leq 3), \\
&&d_{i} &:= a_{i} - a_{i + 8} && (0 \leq i \leq 7). &
\end{align*}
Also, let 
\begin{align*}
\bm{a} := (a_{0}, a_{1}, \ldots, a_{15}), \quad 
\bm{b} := (b_{0}, b_{1}, b_{2}, b_{3}), \quad 
\bm{c} := (c_{0}, c_{1}, c_{2}, c_{3}), \quad 
\bm{d} := (d_{0}, d_{1}, \ldots, d_{7}). 
\end{align*}
Then, from Corollary~$\ref{cor:2.1}$ and Lemma~$\ref{lem:2.2}$, 
we have 
\begin{align*}
D_{G}(\bm{a}) 
&= D_{4 \times 2}(a_{0} + a_{8}, a_{1} + a_{9}, \ldots, a_{7} + a_{15}) F(a_{0} - a_{8}, a_{1} - a_{9}, \ldots, a_{7} - a_{15})^{2} \\ 
&= D_{4}(\bm{b}) D_{4}(\bm{c}) F(\bm{d})^{2}. 
\end{align*}

\begin{rem}\label{rem:2.3}
For any $0 \leq i \leq 3$, we have $b_{i} \equiv c_{i} \equiv d_{i} + d_{i + 4} \pmod{2}$. 
\end{rem}

\begin{lem}\label{lem:2.4}
The following hold: 
\begin{enumerate}
\item[$(1)$] $D_{4}(\bm{b}) \equiv b_{0} + b_{1} + b_{2} + b_{3} \pmod{2}$; 
\item[$(2)$] $D_{4}(\bm{c}) \equiv c_{0} + c_{1} + c_{2} + c_{3} \pmod{2}$; 
\item[$(3)$] $F(\bm{d}) \equiv d_{0} + d_{1} + \cdots + d_{7} \pmod{2}$. 
\end{enumerate}
\end{lem}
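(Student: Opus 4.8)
The plan is to reduce everything modulo $2$, using the two elementary facts that $n^{2} \equiv n$ and $-n \equiv n \pmod 2$, together with explicit product formulas for $D_{4}$ and for $F$.

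For parts $(1)$ and $(2)$, note that $D_{4}(x_{0}, x_{1}, x_{2}, x_{3})$ is the group determinant of ${\rm C}_{4}$, which is a circulant; diagonalizing it by the characters of ${\rm C}_{4}$ (equivalently, applying Frobenius' theorem) gives
\[
D_{4}(x_{0}, x_{1}, x_{2}, x_{3}) = (x_{0} + x_{1} + x_{2} + x_{3})(x_{0} - x_{1} + x_{2} - x_{3})\bigl((x_{0} - x_{2})^{2} + (x_{1} - x_{3})^{2}\bigr).
\]
Modulo $2$, each of the three factors is congruent to $x_{0} + x_{1} + x_{2} + x_{3}$: for the first two this is immediate, and for the last one one replaces the squares by their bases and then the minus signs by plus signs. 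Hence $D_{4}(x_{0}, x_{1}, x_{2}, x_{3}) \equiv (x_{0} + x_{1} + x_{2} + x_{3})^{3} \equiv x_{0} + x_{1} + x_{2} + x_{3} \pmod 2$. Specializing $(x_{0}, x_{1}, x_{2}, x_{3})$ to $\bm{b}$ yields $(1)$ and to $\bm{c}$ yields $(2)$.

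For part $(3)$, recall that $F(w_{0}, \dots, w_{7}) = f_{0}(w_{0} - w_{2}, w_{4} - w_{6}, w_{1} - w_{3}, w_{5} - w_{7})\, f_{1}(w_{0} + w_{2}, w_{4} + w_{6}, w_{1} + w_{3}, w_{5} + w_{7})$, where $f_{k}(x, y, z, w) = x^{2} + y^{2} + (-1)^{k} z^{2} + (-1)^{k} w^{2}$. Modulo $2$ we have $f_{k}(x, y, z, w) \equiv x + y + z + w$ for both $k = 0, 1$, so each of the two factors of $F(\bm{d})$ is congruent to $d_{0} + d_{1} + \cdots + d_{7} \pmod 2$; therefore $F(\bm{d}) \equiv (d_{0} + \cdots + d_{7})^{2} \equiv d_{0} + \cdots + d_{7} \pmod 2$.

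There is essentially no obstacle here: the only step that is not a one-line mod-$2$ simplification is the explicit factorization of $D_{4}$, and that is just the standard diagonalization of a circulant (or the $H = {\rm C}_{4}$ case already underlying Section~2). All sign ambiguities disappear on reduction modulo $2$.
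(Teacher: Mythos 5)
Your proof is correct and follows essentially the same route as the paper: reduce the explicit factorization of $D_{4}$ and of $F$ modulo $2$ using $n^{2}\equiv n$ and $-1\equiv 1$. The only cosmetic difference is that you keep the circulant factorization of $D_{4}$ as three factors, whereas the paper combines the two linear ones into $(x_{0}+x_{2})^{2}-(x_{1}+x_{3})^{2}$; the mod-$2$ computation is identical.
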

\begin{proof}
We obtain (1) and (2) from the following: for any $x_{0}, x_{1}, x_{2}, x_{3} \in \mathbb{Z}$, 
\begin{align*}
D_{4}(x_{0}, x_{1}, x_{2}, x_{3}) 
&= \left\{ (x_{0} + x_{2})^{2} - (x_{1} + x_{3})^{2} \right\} \left\{ (x_{0} - x_{2})^{2} + (x_{1} - x_{3})^{2} \right\} \\ 
&\equiv (x_{0}^{2} + x_{2}^{2} + x_{1}^{2} + x_{3}^{2})^{2} \\ 
&\equiv x_{0} + x_{1} + x_{2} + x_{3} \pmod{2}. 
\end{align*}
We prove (3). 
Since $f_{k}(x, y, z, w) = x^{2} + y^{2} + (- 1)^{k} z^{2} + (- 1)^{k} w^{2} \equiv x + y + z + w \pmod{2}$ for any $0 \leq k \leq 1$ and $x, y, z, w \in \mathbb{Z}$, 
we have 
\begin{align*}
F(\bm{d}) 
&= f_{0}(d_{0} - d_{2}, d_{4} - d_{6}, d_{1} - d_{3}, d_{5} - d_{7}) f_{1}(d_{0} + d_{2}, d_{4} + d_{6}, d_{1} + d_{3}, d_{5} + d_{7}) \\ 
&\equiv (d_{0} + d_{1} + \cdots + d_{7})^{2} \\ 
&\equiv d_{0} + d_{1} + \cdots + d_{7} \pmod{2}. 
\end{align*}
\end{proof}

From Remark~$\ref{rem:2.3}$ and Lemma~$\ref{lem:2.4}$, 
we have the following lemma. 

\begin{lem}\label{lem:2.5}
We have $D_{G}(\bm{a}) \equiv D_{4}(\bm{b}) \equiv D_{4}(\bm{c}) \equiv F(\bm{d}) \pmod{2}$. 
\end{lem}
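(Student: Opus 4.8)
The plan is to deduce the entire chain from the explicit factorization $D_{G}(\bm{a}) = D_{4}(\bm{b}) D_{4}(\bm{c}) F(\bm{d})^{2}$ recorded above, together with Remark~$\ref{rem:2.3}$ and Lemma~$\ref{lem:2.4}$.

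First I would pin down the common parity of the three ``linear summaries''. Summing the congruence $b_{i} \equiv c_{i} \equiv d_{i} + d_{i + 4} \pmod{2}$ of Remark~$\ref{rem:2.3}$ over $i \in \{ 0, 1, 2, 3 \}$ gives
\begin{align*}
b_{0} + b_{1} + b_{2} + b_{3} \equiv c_{0} + c_{1} + c_{2} + c_{3} \equiv \sum_{i = 0}^{3} (d_{i} + d_{i + 4}) = d_{0} + d_{1} + \cdots + d_{7} \pmod{2}.
\end{align*}
Feeding this into the three congruences of Lemma~$\ref{lem:2.4}$ yields at once
\begin{align*}
D_{4}(\bm{b}) \equiv D_{4}(\bm{c}) \equiv F(\bm{d}) \pmod{2}.
\end{align*}

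It then remains to attach $D_{G}(\bm{a})$ to this chain. Let $\varepsilon \in \{ 0, 1 \}$ be the common residue modulo $2$ of $D_{4}(\bm{b})$, $D_{4}(\bm{c})$ and $F(\bm{d})$. From the factorization we get $D_{G}(\bm{a}) \equiv \varepsilon \cdot \varepsilon \cdot \varepsilon^{2} = \varepsilon^{4} \equiv \varepsilon \pmod{2}$, since $\varepsilon^{4} = \varepsilon$ for $\varepsilon \in \{ 0, 1 \}$; hence $D_{G}(\bm{a}) \equiv D_{4}(\bm{b}) \equiv D_{4}(\bm{c}) \equiv F(\bm{d}) \pmod{2}$, as claimed.

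I do not anticipate any genuine obstacle: the statement is bookkeeping once the factorization and the parity formulas of Lemma~$\ref{lem:2.4}$ are in place. The only point needing a moment's care is that the squared factor $F(\bm{d})^{2}$ must not disturb the congruence, which is automatic because $0$ and $1$ are idempotent modulo $2$, so a product of integers all of the same parity again has that parity.
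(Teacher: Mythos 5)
Your proposal is correct and follows exactly the route the paper intends: the paper derives Lemma~\ref{lem:2.5} directly from Remark~\ref{rem:2.3} and Lemma~\ref{lem:2.4} (summing the termwise congruences and then using the factorization $D_{G}(\bm{a}) = D_{4}(\bm{b}) D_{4}(\bm{c}) F(\bm{d})^{2}$), which is precisely what you do. No issues.
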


\section{Impossible odd numbers}\label{section3}
In this section, 
we consider impossible odd numbers. 
Let $\mathbb{Z}_{\rm odd}$ be the set of all odd numbers. 
Let 
$P := \left\{ p \mid p \equiv - 3 \: ({\rm mod} \: 8) \: \text{is a prime number} \right\}$ and let 
\begin{align*}
A := \left\{ (8 k - 3) (8 l - 3) (4 m - 1)^{2} \mid k, m \in \mathbb{Z}, \: m \geq 1, \: 8 l - 3 \in P, \: k + l \equiv m \: \: ({\rm mod} \: {2}) \right\}. 
\end{align*}

\begin{lem}\label{lem:3.1}
We have $S({\rm C}_{4} \rtimes {\rm C}_{4}) \cap \mathbb{Z}_{\rm odd} \subset \{ 16 m + 1 \mid m \in \mathbb{Z} \} \cup A$. 
\end{lem}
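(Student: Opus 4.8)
The plan is to prove that an odd value of $D_G$ is always $\equiv 1 \pmod 8$ — hence $\equiv 1$ or $9$ modulo $16$ — and that the residue $9$ can occur only for numbers in $A$. By Lemma~\ref{lem:2.5}, $D_G(\bm a)$ is odd exactly when $D_4(\bm b)$, $D_4(\bm c)$, $F(\bm d)$ are all odd, which I assume. I would set $e_i := a_i + a_{i+8}$ for $0 \le i \le 7$, so that $b_i = e_i + e_{i+4}$ and $c_i = e_i - e_{i+4}$ for $0 \le i \le 3$, and write $x := e_0+e_2$, $y := e_4+e_6$, $z := e_1+e_3$, $w := e_5+e_7$ and $x' := e_0-e_2$, $y' := e_4-e_6$, $z' := e_1-e_3$, $w' := e_5-e_7$. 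From $D_4(x_0,x_1,x_2,x_3) = \bigl((x_0+x_2)^2-(x_1+x_3)^2\bigr)\bigl((x_0-x_2)^2+(x_1-x_3)^2\bigr)$ and the Brahmagupta--Fibonacci identity one obtains
\[
D_4(\bm b)\,D_4(\bm c) = \bigl[(x^2+y^2-z^2-w^2)^2-4(xy-zw)^2\bigr]\bigl[(x'^2+z'^2-y'^2-w'^2)^2+4(y'z'-x'w')^2\bigr],
\]
where the second bracket is the sum of two squares $M_bM_c$ (with $M_b = (b_0-b_2)^2+(b_1-b_3)^2$, $M_c = (c_0-c_2)^2+(c_1-c_3)^2$) and the first bracket equals $N_bN_c$ (with $N_b = (b_0+b_2)^2-(b_1+b_3)^2$, $N_c = (c_0+c_2)^2-(c_1+c_3)^2$).

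For the mod-$8$ statement: oddness of the product forces exactly one, or exactly three, of $x,y,z,w$ to be odd. If exactly one, the $4(\cdot)^2$ terms vanish modulo $16$ and each bracket is a square of an odd number, so is $\equiv 1 \pmod 8$; if exactly three, the $4(\cdot)^2$ terms are $\equiv 4 \pmod{16}$ and each bracket is $\equiv 5 \pmod 8$. In either case $D_4(\bm b)D_4(\bm c) \equiv 1 \pmod 8$, and in fact lies in $\{1,9\}$ modulo $16$; since $F(\bm d)$ is odd, $F(\bm d)^2$ likewise lies in $\{1,9\}$ modulo $16$. Therefore $D_G(\bm a) = D_4(\bm b)D_4(\bm c)F(\bm d)^2 \equiv 1 \pmod 8$, so $D_G(\bm a) \in \{16m+1 \mid m\in\mathbb Z\} \cup \{16m+9 \mid m\in\mathbb Z\}$.

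Now suppose $D_G(\bm a) \equiv 9 \pmod{16}$. A refinement of the previous paragraph shows that in the ``one odd'' case the mod-$16$ residues of $D_4(\bm b)D_4(\bm c)$ and of $F(\bm d)^2$ are governed by a single parity (the number of ``both odd'' pairs among the three pairs $(e_i,e_{i+2})$ not responsible for the odd entry) and hence coincide, which forces $D_G(\bm a) \equiv 1 \pmod{16}$; so we must be in the ``three odd'' case. Then $M_bM_c \equiv 5 \pmod 8$, hence one of $M_b, M_c$ is a sum of two squares $\equiv 5 \pmod 8$, and such an integer is divisible by a prime $p \equiv 5 \pmod 8$. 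I would then set $8l-3 := p$, extract from $D_G(\bm a)$ a square factor $(4m-1)^2$ with $m \ge 1$, and put $8k-3 := D_G(\bm a)/\bigl(p\,(4m-1)^2\bigr)$; since $D_G(\bm a) \equiv 1 \pmod 8$ one gets $8k-3 \equiv 5 \pmod 8$ automatically, and since $D_G(\bm a) \equiv 9 \pmod{16}$ one gets $(8k-3)(8l-3)(4m-1)^2 \equiv 9-8(k+l+m)\pmod{16}$, i.e.\ $k+l \equiv m \pmod 2$, so $D_G(\bm a) \in A$.

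I expect the last step to be the main obstacle. The reduction to $D_G(\bm a) \equiv 1 \pmod 8$, and thence to residues $1$ and $9$ modulo $16$, is clean. But showing that every residue-$9$ value genuinely has the multiplicative shape of an element of $A$ — above all, that after removing the prime $p$ the quotient is divisible by the square of an integer $\equiv 3 \pmod 4$ that is $\ge 3$ (so that, for instance, $D_G(\bm a) = 25$ cannot occur) — appears to require a detailed case analysis of the mod-$16$ profiles of the six factors $N_b, N_c, M_b, M_c, f_0, f_1$, combined with the arithmetic of sums of two squares.
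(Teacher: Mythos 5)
Your reduction to $D_{G}(\bm{a})\equiv 1\pmod{8}$, the split into the ``one odd''/``three odd'' cases (which is exactly the paper's dichotomy $d\equiv 0$ versus $d\equiv 1\pmod{2}$), and the extraction of a prime $p\equiv 5\pmod{8}$ from the sum-of-two-squares factor $M_{b}M_{c}\equiv 5\pmod{8}$ are all sound; your observation that the parity condition $k+l\equiv m\pmod{2}$ then comes for free from $D_{G}(\bm{a})\equiv 9\pmod{16}$ is correct and in fact slightly slicker than the paper's bookkeeping, which tracks this parity through the auxiliary quantity $d^{*}$ in Lemmas~\ref{lem:3.5} and \ref{lem:3.7}. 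But the step you yourself flag as the main obstacle is a genuine gap, and it is precisely the content that separates $A$ from $\{16m+9\mid m\in\mathbb{Z}\}$: you must exhibit a divisor of $D_{G}(\bm{a})$ of the form $(4m-1)^{2}$ with $m\geq 1$, i.e.\ the square of an integer that is $\equiv 3\pmod{4}$ and at least $3$. Knowing only that $D_{G}(\bm{a})\equiv 9\pmod{16}$ and has a prime factor $\equiv 5\pmod{8}$ does not exclude, for instance, the value $25$, which is not in $A$. As written, your argument proves a strictly weaker statement than the lemma.

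The missing ingredient is not a case analysis of mod-$16$ profiles but a positivity argument. In the paper's notation, $F(\bm{d})=n_{0}n_{1}$ with $n_{0}=f_{0}(d_{0}-d_{2},d_{4}-d_{6},d_{1}-d_{3},d_{5}-d_{7})$ a sum of four squares, hence $n_{0}\geq 0$. In the ``three odd'' case exactly one of $d_{0}+d_{2},d_{4}+d_{6},d_{1}+d_{3},d_{5}+d_{7}$ is even, so three of the four squares in $n_{0}$ are odd and $n_{0}\equiv -1\pmod{4}$; combined with $n_{0}\geq 0$ this gives $n_{0}=4m-1$ with $m\geq 1$, and $n_{0}^{2}=(4m-1)^{2}$ divides $F(\bm{d})^{2}$, hence $D_{G}(\bm{a})$. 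This is Lemma~\ref{lem:3.7} of the paper, and with it your mod-$8$ and mod-$16$ computations do close the argument. A secondary point: your claim that the ``one odd'' case forces $D_{G}(\bm{a})\equiv 1\pmod{16}$ is asserted but not proved; the paper establishes it in Lemmas~\ref{lem:3.4} and \ref{lem:3.6} by showing that $D_{4}(\bm{b})D_{4}(\bm{c})$ and $F(\bm{d})^{2}$ are both $\equiv 1-8d^{*}\pmod{16}$ when $d$ is even, so their product is a square modulo $16$.
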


Let 
\begin{align*}
n_{0} &:= f_{0}(d_{0} - d_{2}, d_{4} - d_{6}, d_{1} - d_{3}, d_{5} - d_{7}), & 
n_{1} &:= f_{1}(d_{0} + d_{2}, d_{4} + d_{6}, d_{1} + d_{3}, d_{5} + d_{7}), \\ 
d &:= (d_{0} + d_{2}) (d_{4} + d_{6}) + (d_{1} + d_{3}) (d_{5} + d_{7}), & 
d^{*} &:= d_{0} d_{2} + d_{4} d_{6} + d_{1} d_{3} + d_{5} d_{7}. 
\end{align*}

To prove Lemma~$\ref{lem:3.1}$, we use the following two remarks and four lemmas. 

\begin{rem}\label{rem:3.2}
From {\rm \cite[Lemma~2.10 (1)]{https://doi.org/10.48550/arxiv.2211.01597}}, 
we have $b_{0} b_{2} + b_{1} b_{3} + c_{0} c_{2} + c_{1} c_{3} \equiv 2 d^{*} \pmod{4}$. 
\end{rem}

\begin{rem}\label{rem:3.3}
Let $\beta := (d_{0} + d_{2} + \sqrt{- 1} d_{4} + \sqrt{- 1} d_{6})^{2} - (d_{1} + d_{3} + \sqrt{- 1} d_{5} + \sqrt{- 1} d_{7})^{2}$. 
Suppose that $b_{0} + b_{2} \not\equiv b_{1} + b_{3} \pmod{2}$. 
Then, $d_{0} + d_{2} + d_{4} + d_{6} \not\equiv d_{1} + d_{3} + d_{5} + d_{7} \pmod{2}$. 
Thus, from {\rm \cite[Lemma~2.8 (1)]{https://doi.org/10.48550/arxiv.2211.01597}}, we have 
\begin{align*}
\beta \overline{\beta} \equiv - 3 \pmod{8} 
&\iff \text{Exactly one of} \: \: d_{0} + d_{2}, d_{4} + d_{6}, d_{1} + d_{3}, d_{5} + d_{7} \: \: \text{is even} \\ 
&\iff d = (d_{0} + d_{2}) (d_{4} + d_{6}) + (d_{1} + d_{3}) (d_{5} + d_{7}) \equiv 1 \pmod{2}. 
\end{align*}
Therefore, the assumption $\beta \overline{\beta} \equiv - 3 \pmod{8}$ in {\rm \cite[Lemma~4.4]{https://doi.org/10.48550/arxiv.2211.01597}} can be replaced by $d \equiv 1 \pmod{2}$. 
\end{rem}

\begin{lem}\label{lem:3.4}
If $b_{0} + b_{2} \not\equiv b_{1} + b_{3} \pmod{2}$, 
then $D_{4}(\bm{b}) D_{4}(\bm{c}) \equiv 1 - 8 d^{*} \pmod{16}$. 
\end{lem}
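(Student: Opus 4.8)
The statement concerns the $2$-adic behaviour of the product $D_4(\bm b)D_4(\bm c)$ under the hypothesis $b_0+b_2\not\equiv b_1+b_3\pmod 2$, and the natural tool is the explicit factorization $D_4(x_0,x_1,x_2,x_3)=\{(x_0+x_2)^2-(x_1+x_3)^2\}\{(x_0-x_2)^2+(x_1-x_3)^2\}$ already recorded in the proof of Lemma~\ref{lem:2.4}. The plan is to reduce everything to congruences mod $16$ in the integers $b_i,c_i$, eliminate the $b$'s and $c$'s in favour of the $d$'s via the definitions $b_i=(a_i+a_{i+8})+(a_{i+4}+a_{i+12})$, $c_i=(a_i+a_{i+8})-(a_{i+4}+a_{i+12})$ and $d_i=a_i-a_{i+8}$, and then invoke Remark~\ref{rem:3.2} to turn the cross term $b_0b_2+b_1b_3+c_0c_2+c_1c_3$ into $2d^{*}$.

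First I would set $u_i:=a_i+a_{i+8}$ and $v_i:=a_{i+4}+a_{i+12}$, so $b_i=u_i+v_i$ and $c_i=u_i-v_i$. Writing $D_4(\bm b)=\{(b_0+b_2)^2-(b_1+b_3)^2\}\{(b_0-b_2)^2+(b_1-b_3)^2\}$ and similarly for $\bm c$, the hypothesis says $b_0+b_2$ and $b_1+b_3$ have opposite parity; by Remark~\ref{rem:2.3} the same is then true for $c_0+c_2$ and $c_1+c_3$, and in fact $d_0+d_2+d_4+d_6\not\equiv d_1+d_3+d_5+d_7\pmod 2$. Hence in each of the four bracketed factors exactly one square is even and one is odd, so each bracket is odd, and in particular $D_4(\bm b)D_4(\bm c)$ is odd. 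The next step is to compute each bracket mod a suitable power of $2$: an odd square is $1\pmod 8$ and an even square is $0\pmod 4$, so $(b_0+b_2)^2-(b_1+b_3)^2\equiv\pm1\pmod 4$ with the sign determined by which of the two is odd, and one pushes to mod $16$ using that an odd square is $1$ or $9\pmod{16}$. The cleaner route is to expand the whole product symbolically: $D_4(\bm b)D_4(\bm c)$ is a polynomial in the $b_i,c_i$, and modulo $16$ only the "low-degree" contributions survive once one knows the parities. Concretely, $\bigl[(b_0+b_2)^2-(b_1+b_3)^2\bigr]\bigl[(b_0-b_2)^2+(b_1-b_3)^2\bigr]=\bigl(b_0^2+b_1^2+b_2^2+b_3^2\bigr)^2-\bigl(2b_0b_2+2b_1b_3\bigr)^2+\text{(cross terms)}$; tracking this carefully and adding the analogous expression for $\bm c$, the squared "sum of four squares" terms combine, and the mixed terms collapse to a multiple of $b_0b_2+b_1b_3+c_0c_2+c_1c_3$ modulo $16$.

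The key reduction is then: modulo $16$, $D_4(\bm b)D_4(\bm c)\equiv 1-8(b_0b_2+b_1b_3+c_0c_2+c_1c_3)/1$ up to reorganizing — more precisely one wants to land on $1-8(b_0b_2+b_1b_3+c_0c_2+c_1c_3)\cdot\tfrac12$, which by Remark~\ref{rem:3.2} equals $1-8d^{*}$, since $b_0b_2+b_1b_3+c_0c_2+c_1c_3\equiv 2d^{*}\pmod 4$ forces $8(b_0b_2+b_1b_3+c_0c_2+c_1c_3)\equiv 16d^{*}\equiv 0$... so one must instead keep the coefficient $4$ and use the mod-$4$ information: $4(b_0b_2+b_1b_3+c_0c_2+c_1c_3)\equiv 8d^{*}\pmod{16}$. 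Thus the target congruence $D_4(\bm b)D_4(\bm c)\equiv 1-8d^{*}\pmod{16}$ will emerge from the identity "product $\equiv 1 - 4(b_0b_2+b_1b_3+c_0c_2+c_1c_3)\pmod{16}$" combined with Remark~\ref{rem:3.2}.

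The main obstacle is the bookkeeping in the second step: verifying that, under the parity hypothesis, all terms in the expansion of $D_4(\bm b)D_4(\bm c)$ beyond the constant $1$ and the single term $-4(b_0b_2+b_1b_3+c_0c_2+c_1c_3)$ actually vanish modulo $16$. This requires knowing not just parities but the values mod $8$ of the odd squares involved and mod $4$ of the even ones, and exploiting cancellation between the $\bm b$-product and the $\bm c$-product (the terms odd-in-$v$ cancel, the terms even-in-$v$ double). I would organize this by splitting into the two symmetric cases ($b_0+b_2$ odd, $b_1+b_3$ even, versus the reverse), handling one and noting the other is identical by the $b\leftrightarrow$ relabelling, and in each case reduce the eight integers $a_i$ to their residues mod a small power of $2$ so the remaining check is a finite verification. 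Once that mod-$16$ expansion is in hand, the substitution via Remark~\ref{rem:3.2} is immediate and completes the proof.
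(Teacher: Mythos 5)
Your plan is correct and follows essentially the same route as the paper: the paper deduces the lemma by combining Remark~\ref{rem:3.2} with the cited congruence $D_{4}(\bm{b}) D_{4}(\bm{c}) \equiv 1 - 4 (b_{0} b_{2} + b_{1} b_{3} + c_{0} c_{2} + c_{1} c_{3}) \pmod{16}$ from \cite[Lemma~4.3]{https://doi.org/10.48550/arxiv.2211.01597}, which is exactly the intermediate statement you propose to establish by direct expansion of the factorization of $D_{4}$. That expansion does go through as outlined --- the one detail to watch is that when $b_{0}+b_{2}$ is even each individual factor satisfies $D_{4}(\bm{b}) \equiv -(1 - 4 b_{0} b_{2} - 4 b_{1} b_{3})$ and $D_{4}(\bm{c}) \equiv -(1 - 4 c_{0} c_{2} - 4 c_{1} c_{3}) \pmod{16}$, the two signs cancelling in the product because Remark~\ref{rem:2.3} puts $\bm{b}$ and $\bm{c}$ in the same parity case --- after which Remark~\ref{rem:3.2} converts $4 (b_{0} b_{2} + b_{1} b_{3} + c_{0} c_{2} + c_{1} c_{3})$ into $8 d^{*}$ modulo $16$, as you conclude.
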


Lemma~$\ref{lem:3.4}$ is immediately obtained from \cite[Lemma~4.3]{https://doi.org/10.48550/arxiv.2211.01597} and Remark~$\ref{rem:3.2}$. 

\begin{lem}\label{lem:3.5}
Suppose that $b_{0} + b_{2} \not\equiv b_{1} + b_{3}$, $d \equiv 1 \pmod{2}$. 
Then the following hold: 
\begin{enumerate}
\item[$(1)$] If $d^{*} \equiv 0 \pmod{2}$, then 
$$
D_{4}(\bm{b}) D_{4}(\bm{c}) \in \left\{ (8 k - 3) (8 l - 3) \mid k \in \mathbb{Z}, \: 8 l - 3 \in P, \: k \not\equiv l \: \: ({\rm mod} \: 2) \right\}; 
$$
\item[$(2)$] If $d^{*} \equiv 1 \pmod{2}$, then 
$$
D_{4}(\bm{b}) D_{4}(\bm{c}) \in \left\{ (8 k - 3) (8 l - 3) \mid k \in \mathbb{Z}, \: 8 l - 3 \in P, \: k \equiv l \: \: ({\rm mod} \: 2) \right\}. 
$$
\end{enumerate}
\end{lem}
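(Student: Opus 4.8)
The plan is to deduce Lemma~\ref{lem:3.5} from \cite[Lemma~4.4]{https://doi.org/10.48550/arxiv.2211.01597} together with Lemma~\ref{lem:3.4}, doing only elementary congruence bookkeeping ourselves. Under the standing hypothesis $b_{0} + b_{2} \not\equiv b_{1} + b_{3} \pmod{2}$, Remark~\ref{rem:3.3} shows that the condition $d \equiv 1 \pmod{2}$ is equivalent to the hypothesis $\beta \overline{\beta} \equiv - 3 \pmod{8}$ appearing in \cite[Lemma~4.4]{https://doi.org/10.48550/arxiv.2211.01597}. Hence that lemma applies and yields a factorization $D_{4}(\bm{b}) D_{4}(\bm{c}) = (8 k - 3)(8 l - 3)$ with $k \in \mathbb{Z}$ and $8 l - 3 \in P$. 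This already places $D_{4}(\bm{b}) D_{4}(\bm{c})$ in the sets described in Lemma~\ref{lem:3.5}, except that the parity relation between $k$ and $l$ still has to be pinned down.

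I would determine that parity by reducing modulo $16$. Expanding, $(8 k - 3)(8 l - 3) = 64 k l - 24 k - 24 l + 9 \equiv 9 - 8 (k + l) \pmod{16}$, which equals $1 \pmod{16}$ when $k + l$ is odd and $9 \pmod{16}$ when $k + l$ is even. On the other hand, Lemma~\ref{lem:3.4} gives $D_{4}(\bm{b}) D_{4}(\bm{c}) \equiv 1 - 8 d^{*} \pmod{16}$, which is $1 \pmod{16}$ when $d^{*}$ is even and $9 \pmod{16}$ when $d^{*}$ is odd. Comparing the two congruences forces $k + l \equiv 1 + d^{*} \pmod{2}$. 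In case $(1)$, where $d^{*} \equiv 0 \pmod{2}$, this gives $k \not\equiv l \pmod{2}$; in case $(2)$, where $d^{*} \equiv 1 \pmod{2}$, it gives $k \equiv l \pmod{2}$. This is exactly the claimed refinement. Note that the parity of $k + l$ is independent of the chosen factorization, since it is already determined by the residue $D_{4}(\bm{b}) D_{4}(\bm{c}) \bmod 16$.

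I do not expect a genuine obstacle: the only substantive input is \cite[Lemma~4.4]{https://doi.org/10.48550/arxiv.2211.01597}, which is where a prime factor lying in $P$ is exhibited, and everything else is the congruence computation above. Were one to aim for a self-contained argument, the crux would instead be to analyze the factor $(b_{0} - b_{2})^{2} + (b_{1} - b_{3})^{2}$ (and its $\bm{c}$-analogue) as a sum of two squares of opposite parity, thereby controlling its odd prime factors modulo $4$, and to combine this with the difference-of-squares factor $(b_{0} + b_{2})^{2} - (b_{1} + b_{3})^{2}$ and the information $b_{0} b_{2} + b_{1} b_{3} + c_{0} c_{2} + c_{1} c_{3} \equiv 2 d^{*} \pmod{4}$ from Remark~\ref{rem:3.2}; that case analysis is precisely what the cited lemma performs.
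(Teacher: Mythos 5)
Your argument is correct and follows essentially the same route as the paper, which obtains Lemma~\ref{lem:3.5} directly from the cited Lemma~4.4 together with Remarks~\ref{rem:3.2} and \ref{rem:3.3} (Remark~\ref{rem:3.2} translating that lemma's case condition on $b_{0} b_{2} + b_{1} b_{3} + c_{0} c_{2} + c_{1} c_{3} \bmod 4$ into the condition on $d^{*}$). Your only deviation is to recover the parity of $k + l$ by comparing $(8 k - 3)(8 l - 3) \equiv 9 + 8 (k + l) \pmod{16}$ with the congruence $D_{4}(\bm{b}) D_{4}(\bm{c}) \equiv 1 - 8 d^{*} \pmod{16}$ from Lemma~\ref{lem:3.4} rather than invoking Remark~\ref{rem:3.2}; this is equivalent bookkeeping, and you correctly observe that the parity of $k + l$ is determined by the residue modulo $16$ and hence independent of the chosen factorization.
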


Lemma~$\ref{lem:3.5}$ is immediately obtained from \cite[Lemma~4.4]{https://doi.org/10.48550/arxiv.2211.01597} and Remarks~$\ref{rem:3.2}$ and $\ref{rem:3.3}$. 

\begin{lem}\label{lem:3.6}
If $b_{0} + b_{2} \not\equiv b_{1} + b_{3} \pmod{2}$, 
then $F(\bm{d})^{2} \equiv 1 - 8 d^{*} - 8 d \pmod{16}$. 
\end{lem}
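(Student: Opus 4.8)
The plan is to reduce the claim to a congruence modulo $16$ expressed in the four integers $d_{0}+d_{2}$, $d_{4}+d_{6}$, $d_{1}+d_{3}$, $d_{5}+d_{7}$, and then to finish with a short parity case analysis. First I would unpack the hypothesis: by Remark~$\ref{rem:2.3}$ we have $b_{i}\equiv d_{i}+d_{i+4}\pmod{2}$, so $b_{0}+b_{2}\not\equiv b_{1}+b_{3}\pmod{2}$ says precisely that $d_{0}+d_{1}+\cdots+d_{7}$ is odd; equivalently, an odd number — one or three — of the four sums $d_{0}+d_{2}$, $d_{4}+d_{6}$, $d_{1}+d_{3}$, $d_{5}+d_{7}$ are odd. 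In particular $F(\bm{d})$ is odd by Lemma~$\ref{lem:2.4}$(3), so $F(\bm{d})^{2}\bmod 16$ is either $1$ or $9$, and the statement is exactly the assertion that it equals $1$ precisely when $d^{*}+d$ is even.

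Next I would rewrite $F(\bm{d})=n_{0}n_{1}$ in a convenient form. Put $\sigma:=(d_{0}+d_{2})^{2}+(d_{4}+d_{6})^{2}$ and $\tau:=(d_{1}+d_{3})^{2}+(d_{5}+d_{7})^{2}$. From the definition of $f_{1}$ one has $n_{1}=\sigma-\tau$ outright, and completing the square in each term of $n_{0}=f_{0}(d_{0}-d_{2},d_{4}-d_{6},d_{1}-d_{3},d_{5}-d_{7})$ gives $n_{0}=\sigma+\tau-4d^{*}$; also $d=(d_{0}+d_{2})(d_{4}+d_{6})+(d_{1}+d_{3})(d_{5}+d_{7})$ by definition. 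Since $\sigma+\tau\equiv d_{0}+\cdots+d_{7}$ is odd, both $\sigma+\tau$ and $\sigma-\tau$ are odd, so, using $8\times(\text{odd})\equiv 8\pmod{16}$ to kill the cross terms when expanding $F(\bm{d})^{2}=(\sigma-\tau)^{2}(\sigma+\tau-4d^{*})^{2}$, one obtains
$$
F(\bm{d})^{2}\equiv(\sigma-\tau)^{2}(\sigma+\tau)^{2}-8d^{*}\equiv(\sigma^{2}-\tau^{2})^{2}-8d^{*}\pmod{16}.
$$
It therefore remains to prove $(\sigma^{2}-\tau^{2})^{2}\equiv 1-8d\pmod{16}$.

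For this last step I would split into the two cases permitted by the hypothesis, after a normalization. The substitution $d_{0}\leftrightarrow d_{1}$, $d_{2}\leftrightarrow d_{3}$, $d_{4}\leftrightarrow d_{5}$, $d_{6}\leftrightarrow d_{7}$ interchanges $\sigma$ and $\tau$, fixes $d^{*}$, $d$ and $F(\bm{d})^{2}$, and preserves the hypothesis, so I may assume that the pair $\{d_{0}+d_{2},\,d_{4}+d_{6}\}$ contains at least as many odd entries as the pair $\{d_{1}+d_{3},\,d_{5}+d_{7}\}$. If exactly one of the four sums is odd, then $d$ is even, $\sigma$ is odd and $\tau\equiv 0\pmod{4}$, so $\sigma^{2}-\tau^{2}\equiv 1\pmod{8}$ and hence $(\sigma^{2}-\tau^{2})^{2}\equiv 1\equiv 1-8d\pmod{16}$. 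If exactly three are odd, then $d$ is odd, $\sigma\equiv 2\pmod{8}$ and $\tau$ is odd, so $\sigma^{2}\equiv 4\pmod{16}$, $\tau^{2}\equiv 1\pmod{8}$, whence $\sigma^{2}-\tau^{2}\equiv 3\pmod{8}$ and $(\sigma^{2}-\tau^{2})^{2}\equiv 9\equiv 1-8d\pmod{16}$. In either case $(\sigma^{2}-\tau^{2})^{2}\equiv 1-8d\pmod{16}$, and combining with the displayed congruence gives $F(\bm{d})^{2}\equiv 1-8d^{*}-8d\pmod{16}$.

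The one delicate point is the $2$-adic bookkeeping in the last paragraph: squares must be tracked modulo $16$, not merely modulo $8$, using that an odd square is $\equiv 1$ or $9\pmod{16}$ (so that $m^{2}\bmod 16$ is determined by $m\bmod 8$), that $(4k)^{2}\equiv 0\pmod{16}$, and that $(8k+2)^{2}\equiv 4\pmod{16}$. The hypothesis is used in exactly one essential way — it forces $\sigma\pm\tau$ to be odd, which is precisely what makes every cross term collapse to a multiple of $16$.
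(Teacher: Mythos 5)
Your proof is correct. The algebraic core is the same as the paper's: both arguments rest on $n_{1}=\sigma-\tau$ and $n_{0}=\sigma+\tau-4d^{*}$ (the paper encodes these as explicit formulas for $n_{0}\pm n_{1}$) and on the fact that the hypothesis forces $n_{0}$ and $n_{1}$ to be odd, so that $8\cdot(\text{odd})\equiv 8\pmod{16}$ collapses all cross terms. Where you diverge is in how the $-8d$ term is extracted: the paper expands $(n_{0}+n_{1})(n_{0}-n_{1})$ and pushes through a uniform chain of congruences to get $n_{0}^{2}-n_{1}^{2}\equiv 8d+8d^{*}\pmod{16}$, then finishes with $n_{0}^{4}\equiv 1\pmod{16}$; you instead split off $-8d^{*}$ algebraically, reduce to $(\sigma^{2}-\tau^{2})^{2}\equiv 1-8d\pmod{16}$, and settle that by a symmetry normalization plus a parity case analysis on the four sums $d_{0}+d_{2},d_{4}+d_{6},d_{1}+d_{3},d_{5}+d_{7}$ (one odd versus three odd). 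Your route trades the paper's longer expansion for two short cases and makes visible why $d$ enters only through its parity; the paper's route avoids any case split. Your $2$-adic bookkeeping — $m^{2}\bmod 16$ determined by $m\bmod 8$, $(4k)^{2}\equiv 0$ and $(8k+2)^{2}\equiv 4\pmod{16}$ — is all accurate, and the WLOG swap $d_{i}\leftrightarrow d_{i+1}$ for even $i$ does preserve $d$, $d^{*}$ and the hypothesis as you claim, so there is no gap.
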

\begin{proof}
Let $b_{0} + b_{2} \not\equiv b_{1} + b_{3} \pmod{2}$. 
Then 
$$
d_{0} + d_{2} + d_{4} + d_{6} \not\equiv d_{1} + d_{3} + d_{5} + d_{7} \pmod{2}. 
$$
Therefore, from $n_{0} \equiv n_{1} \equiv 1 \pmod{2}$, 
we have $n_{0}^{2} \equiv n_{1}^{2} \equiv 1 \pmod{8}$. 
Also, from 
\begin{align*}
n_{0} + n_{1} &= 2 (d_{0} + d_{2} + d_{4} + d_{6})^{2} - 4 (d_{0} + d_{2}) (d_{4} + d_{6}) - 4 d^{*}, \\ 
n_{0} - n_{1} &= 2 (d_{1} + d_{3} + d_{5} + d_{7})^{2} - 4 (d_{1} + d_{3}) (d_{5} + d_{7}) - 4 d^{*}, 
\end{align*}
we have 
\begin{align*}
n_{0}^{2} - n_{1}^{2} 
&= (n_{0} + n_{1}) (n_{0} - n_{1}) \\ 
&\equiv 8 (d_{0} + d_{2} + d_{4} + d_{6})^{2} (d_{1} + d_{3}) (d_{5} + d_{7}) \\ 
&\qquad + 8 (d_{1} + d_{3} + d_{5} + d_{7})^{2} (d_{0} + d_{2}) (d_{4} + d_{6}) \\ 
&\qquad \quad + 8 \left\{ (d_{0} + d_{2} + d_{4} + d_{6})^{2} + (d_{1} + d_{3} + d_{5} + d_{7})^{2} \right\} d^{*} \\ 
&\equiv 8 (d_{0} + d_{2} + d_{4} + d_{6}) (d_{1} + d_{3}) (d_{5} + d_{7}) \\ 
&\qquad + 8 (d_{1} + d_{3} + d_{5} + d_{7}) (d_{0} + d_{2}) (d_{4} + d_{6}) + 8 d^{*} \\ 
&\equiv 8 \left\{ (d_{0} + d_{2} + d_{4} + d_{6}) + (d_{1} + d_{3} + d_{5} + d_{7}) \right\} d \\ 
&\qquad - 8 (d_{0} + d_{2} + d_{4} + d_{6}) (d_{0} + d_{2}) (d_{4} + d_{6}) \\ 
&\qquad \quad - 8 (d_{1} + d_{3} + d_{5} + d_{7}) (d_{1} + d_{3}) (d_{5} + d_{7}) + 8 d^{*} \\ 
&\equiv 8 d + 8 d^{*} \pmod{16}. 
\end{align*}
From the above, 
$F(\bm{d})^{2} = n_{0}^{2} n_{1}^{2} \equiv n_{0}^{2} (n_{0}^{2} - 8 d - 8 d^{*}) \equiv 1 - 8 d - 8 d^{*} \pmod{16}$. 
\end{proof}

\begin{lem}\label{lem:3.7}
Suppose that $b_{0} + b_{2} \not\equiv b_{1} + b_{3}$, $d \equiv 1 \pmod{2}$. 
Then we have the following: 
\begin{enumerate}
\item[$(1)$] If $d^{*} \equiv 0 \pmod{2}$, then 
$$
F(\bm{d})^{2} \in \left\{ (4 m - 1)^{2} (8 n + 1) \mid m, n \in \mathbb{Z}, \: m \geq 1, \: m \not\equiv n \: \: ({\rm mod} \: 2) \right\}; 
$$
\item[$(2)$] If $d^{*} \equiv 1 \pmod{2}$, then 
$$
F(\bm{d})^{2} \in \left\{ (4 m - 1)^{2} (8 n + 1) \mid m, n \in \mathbb{Z}, \: m \geq 1, \: m \equiv n \: \: ({\rm mod} \: 2) \right\}. 
$$
\end{enumerate}
\end{lem}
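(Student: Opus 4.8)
The plan is to read off the required witnesses directly from the factorization $F(\bm{d})^{2} = n_{0}^{2} n_{1}^{2}$. Under the hypotheses I will show that $n_{0}$ is a positive integer with $n_{0} \equiv 3 \pmod{4}$ and that $n_{1}$ is a nonzero odd integer; then, setting $m := (n_{0} + 1)/4$ and $n := (n_{1}^{2} - 1)/8$, we get $m \geq 1$, $n \geq 0$, and $F(\bm{d})^{2} = (4m - 1)^{2} (8n + 1)$, so that only the relative parity of $m$ and $n$ remains to be computed. (This runs parallel to the treatment of $D_{4}(\bm{b}) D_{4}(\bm{c})$ in Lemma~\ref{lem:3.5}.)

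First I would pin down the shapes of $n_{0}$ and $n_{1}$. By Remark~\ref{rem:3.3}, the assumptions $b_{0} + b_{2} \not\equiv b_{1} + b_{3}$ and $d \equiv 1 \pmod{2}$ force exactly one of $d_{0} + d_{2},\ d_{4} + d_{6},\ d_{1} + d_{3},\ d_{5} + d_{7}$ to be even; since $d_{i} - d_{j} \equiv d_{i} + d_{j} \pmod{2}$, exactly one of $d_{0} - d_{2},\ d_{4} - d_{6},\ d_{1} - d_{3},\ d_{5} - d_{7}$ is even as well. Hence $n_{0} = f_{0}(d_{0} - d_{2}, d_{4} - d_{6}, d_{1} - d_{3}, d_{5} - d_{7})$ is a sum of three odd squares and one even square, so $n_{0} \equiv 3 \pmod{4}$; moreover three of these four summands are nonzero, so $n_{0} \geq 3$ and $m = (n_{0} + 1)/4 \geq 1$. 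Reducing $n_{1}$ modulo $2$ gives $n_{1} \equiv (d_{0} + d_{2}) + (d_{4} + d_{6}) + (d_{1} + d_{3}) + (d_{5} + d_{7}) \equiv 1 \pmod{2}$, so $n_{1}$ is odd, in particular $n_{1} \neq 0$, whence $n = (n_{1}^{2} - 1)/8$ is a nonnegative integer and $F(\bm{d})^{2} = n_{0}^{2} n_{1}^{2} = (4m - 1)^{2} (8n + 1)$.

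It remains to compare parities. From $n_{0} \equiv 3 \pmod{4}$ one gets $m$ odd $\iff n_{0} \equiv 3 \pmod{8} \iff n_{0}^{2} \equiv 9 \pmod{16}$, and since $n_{1}$ is odd one gets $n$ odd $\iff n_{1}^{2} \equiv 9 \pmod{16}$ (an odd square being $\equiv 1$ or $9 \pmod{16}$). The computation carried out in the proof of Lemma~\ref{lem:3.6} yields $n_{0}^{2} - n_{1}^{2} \equiv 8 d + 8 d^{*} \pmod{16}$, and with $d \equiv 1 \pmod{2}$ this becomes $n_{0}^{2} - n_{1}^{2} \equiv 8 + 8 d^{*} \pmod{16}$. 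If $d^{*} \equiv 0 \pmod{2}$, then $n_{0}^{2} \not\equiv n_{1}^{2} \pmod{16}$, so exactly one of $n_{0}^{2}, n_{1}^{2}$ is $\equiv 9 \pmod{16}$, i.e. exactly one of $m, n$ is odd, proving $m \not\equiv n \pmod{2}$ as in (1); if $d^{*} \equiv 1 \pmod{2}$, then $n_{0}^{2} \equiv n_{1}^{2} \pmod{16}$, so $m \equiv n \pmod{2}$ as in (2).

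The whole argument is essentially bookkeeping once one sees that the $(4m - 1)$-factor should be taken to be $n_{0}$ itself and that $n_{1}^{2}$ then automatically supplies a factor of the form $8n + 1$. The only inputs that really need attention are that $n_{0}$ is forced into the residue class $3 \pmod{4}$ with $n_{0} \geq 3$ and that $n_{1} \neq 0$, both of which follow from the "exactly one is even" dichotomy of Remark~\ref{rem:3.3}; I do not expect a genuine obstacle beyond verifying these.
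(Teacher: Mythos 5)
Your proposal is correct and follows essentially the same route as the paper: both identify $n_{0}=4m-1$ with $m\geq 1$ via the ``exactly one even'' dichotomy, write $n_{1}^{2}=8n+1$, and then settle the parity of $m+n$ using the congruence established in Lemma~\ref{lem:3.6} (you invoke the intermediate step $n_{0}^{2}-n_{1}^{2}\equiv 8d+8d^{*}\pmod{16}$, the paper the resulting $F(\bm{d})^{2}\equiv 9-8d^{*}\pmod{16}$, which is an immaterial difference).
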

\begin{proof}
From $b_{0} + b_{2} \not\equiv b_{1} + b_{3}$, $d \equiv 1 \pmod{2}$, 
exactly one of $d_{0} + d_{2}$, $d_{4} + d_{6}$, $d_{1} + d_{3}$, $d_{5} + d_{7}$ is even. 
Therefore, we have $n_{0} \equiv - 1 \pmod{4}$ and $n_{1} \equiv 1 \pmod{2}$. 
From $n_{0} \geq 0$, there exists $m \geq 1$ satisfying $n_{0} = 4 m - 1$. 
Also, from $n_{1}^{2} \equiv 1 \pmod{8}$, there exists $n \in \mathbb{Z}$ satisfying $n_{1}^{2} = 8 n + 1$. 
Therefore, $F(\bm{d})^{2} = n_{0}^{2} n_{1}^{2} = (4 m - 1)^{2} (8 n + 1) \equiv 8 (m + n) + 1 \pmod{16}$. 
On the other hand, from Lemma~$\ref{lem:3.6}$, 
we have $F(\bm{d})^{2} \equiv 1 - 8 d^{*} - 8 d \equiv 9 - 8 d^{*} \pmod{16}$. 
From the above, the lemma is proved. 
\end{proof}

\begin{proof}[Proof of Lemma~$\ref{lem:3.1}$]
Suppose that $D_{G}(\bm{a}) = D_{4}(\bm{b}) D_{4}(\bm{c}) F(\bm{d})^{2} \in \mathbb{Z}_{\rm odd}$. 
Then $D_{4}(\bm{b})$ is odd. 
Thus, from Lemma~$\ref{lem:2.4}$ (1), $b_{0} + b_{2} \not\equiv b_{1} + b_{3} \pmod{2}$. 
Therefore, from Lemmas~$\ref{lem:3.4}$ and $\ref{lem:3.6}$, 
\begin{align*}
D_{4}(\bm{b}) D_{4}(\bm{c}) F(\bm{d})^{2} \equiv (1 - 8 d^{*}) (1 - 8 d^{*} - 8 d) \equiv 1 - 8 d \pmod{16}. 
\end{align*}
From this, if $d \equiv 0 \pmod{2}$, then $D_{G}(\bm{a}) \in \{ 16 m + 1 \mid m \in \mathbb{Z} \}$. 
On the other hand, if $d \equiv 1 \pmod{2}$, 
then $D_{G}(\bm{a}) \in A'$ from Lemmas~$\ref{lem:3.5}$ and $\ref{lem:3.7}$, 
where 
\begin{align*}
A' &:= \{ (8 k - 3) (8 l - 3) (4 m - 1)^{2} (8 n + 1) \mid k, m, n \in \mathbb{Z}, \: m \geq 1, \\ 
&\qquad \quad 8 l - 3 \in P, \: k + l \equiv m + n \: \: ({\rm mod} \: 2) \}
\end{align*}
Since $A = A'$ holds, the lemma is proved. 
\end{proof}

\section{Impossible even numbers}
In this section, 
we consider impossible even numbers. 
Let 
$$
P' := \left\{ p \mid p \equiv - 1 \: \: ({\rm mod} \: 4) \: \: \text{is a prime number} \right\}. 
$$

\begin{lem}\label{lem:4.1}
The following hold: 
\begin{enumerate}
\item[$(1)$] $S({\rm C}_{4} \rtimes {\rm C}_{4}) \cap 2 \mathbb{Z} \subset 2^{14} \mathbb{Z}$; 
\item[$(2)$] $S({\rm C}_{4} \rtimes {\rm C}_{4}) \cap 2^{14} \mathbb{Z}_{\rm odd} \subset \left\{ 2^{14} p (2 m + 1), \: 2^{14} q^{2} (2 m + 1) \mid m \in \mathbb{Z}, \: p \in P, \: q \in P' \right\}$. 
\end{enumerate}
\end{lem}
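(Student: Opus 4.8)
The plan is to read off $v_{2}(D_{G}(\bm a))$ from the factorization $D_{G}(\bm a)=D_{4}(\bm b)D_{4}(\bm c)F(\bm d)^{2}$ under the hypothesis that $D_{G}(\bm a)$ is even. By Lemma~\ref{lem:2.5} this forces $D_{4}(\bm b),D_{4}(\bm c),F(\bm d)$ all even, hence by Lemma~\ref{lem:2.4} $b_{0}+b_{2}\equiv b_{1}+b_{3}$, $c_{0}+c_{2}\equiv c_{1}+c_{3}\pmod 2$ and $d_{0}+\cdots+d_{7}\equiv 0\pmod 2$. I would first record the integrality linkages between $\bm b,\bm c,\bm d$ coming from $b_{i}+c_{i}=2(a_{i}+a_{i+8})$, $b_{i}-c_{i}=2(a_{i+4}+a_{i+12})$, $d_{i}=a_{i}-a_{i+8}$, $d_{i+4}=a_{i+4}-a_{i+12}$ ($0\le i\le 3$); in particular $\tfrac{b_{i}+c_{i}}{2}\equiv d_{i}$ and $\tfrac{b_{i}-c_{i}}{2}\equiv d_{i+4}\pmod 2$, so $\bm b,\bm c,\bm d$ are far from independent. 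Besides these, the tools are Remarks~\ref{rem:3.2}, \ref{rem:3.3}, even-case analogues of Lemmas~\ref{lem:3.4}--\ref{lem:3.5} for $D_{4}(\bm b)D_{4}(\bm c)$ (as in \cite{https://doi.org/10.48550/arxiv.2211.01597}), and two elementary facts: $D_{4}$ of an integer vector is $\equiv 0$ or odd modulo $16$ (so $v_{2}(D_{4})\in\{0\}\cup\{n\ge 4\}$), and an odd sum of two squares has every prime $\equiv 3\pmod 4$ to even multiplicity.

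For part~(1) I would split on the parities of $b_{0}\pm b_{2},b_{1}\pm b_{3}$ (equivalently, as $b_{i}\equiv c_{i}\pmod 2$, of $c_{0}\pm c_{2},c_{1}\pm c_{3}$, and through the linkages the residues $d_{i}\bmod 2$). In the principal case $b_{0}+b_{2}\equiv b_{1}+b_{3}\equiv 1\pmod 2$ one gets $v_{2}\big((b_{0}+b_{2})^{2}-(b_{1}+b_{3})^{2}\big)\ge 3$ and $v_{2}\big((b_{0}-b_{2})^{2}+(b_{1}-b_{3})^{2}\big)=1$, so $v_{2}(D_{4}(\bm b))\ge 4$ and likewise for $\bm c$; and the forced $d_{i}$-parities make exactly one of $d_{0}\pm d_{2},d_{4}\pm d_{6}$ odd and exactly one of $d_{1}\pm d_{3},d_{5}\pm d_{7}$, giving $v_{2}(n_{0})=1$, $v_{2}(n_{1})\ge 2$, so $v_{2}(F(\bm d))\ge 3$ and $v_{2}(D_{G}(\bm a))\ge 4+4+6=14$. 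In the complementary regime $b_{0}+b_{2}\equiv b_{1}+b_{3}\equiv 0\pmod 2$ one has $v_{2}(F(\bm d))\ge 2$, and the $\equiv 0$-or-odd-$\pmod{16}$ fact together with the linkages takes over: e.g. when all $b_{i}$ (hence all $c_{i}$) are even, a short parity computation with $\tfrac{b_{i}\pm c_{i}}{2}\equiv d_{i},d_{i+4}$ shows $D_{4}(\bm b/2)$ and $D_{4}(\bm c/2)$ cannot both be odd unless $v_{2}(F(\bm d))$ is already $\ge 4$, and in the remaining sub-cases $v_{2}(D_{4}(\bm b))$ and $v_{2}(D_{4}(\bm c))$ cannot both sit at their minimum while $v_{2}(F(\bm d))$ is small; in each sub-case $v_{2}(D_{G}(\bm a))\ge 14$ (in fact $\ge 16$ over most of this regime). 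This proves $(1)$.

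For part~(2), assume $v_{2}(D_{G}(\bm a))=14$. Then the valuation count is tight in every factor (e.g. in the principal case $v_{2}\big((b_{0}+b_{2})^{2}-(b_{1}+b_{3})^{2}\big)=3$, $v_{2}\big((b_{0}-b_{2})^{2}+(b_{1}-b_{3})^{2}\big)=1$ and the same for $\bm c$, $v_{2}(n_{0})=1$, $v_{2}(n_{1})=2$), and the odd part factors as $D_{G}(\bm a)/2^{14}=R\cdot S$, where $S$ collects the odd cofactors of $(b_{0}-b_{2})^{2}+(b_{1}-b_{3})^{2}$, $(c_{0}-c_{2})^{2}+(c_{1}-c_{3})^{2}$ and $(n_{0}/2)^{2}(n_{1}/4)^{2}$ — hence is a sum of two squares, so every prime $\equiv 3\pmod 4$ divides $S$ to even multiplicity — while $R$ collects the odd cofactors of $(b_{0}+b_{2})^{2}-(b_{1}+b_{3})^{2}$ and $(c_{0}+c_{2})^{2}-(c_{1}+c_{3})^{2}$. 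One then pins $D_{G}(\bm a)/2^{14}\bmod 8$ via even-case congruence lemmas analogous to Lemmas~\ref{lem:3.4}, \ref{lem:3.6}, \ref{lem:3.7} (expressed through $d,d^{*}$ using Remarks~\ref{rem:3.2}, \ref{rem:3.3} and $\tfrac{b_{i}+c_{i}}{2}\equiv d_{i}$); combined with the sum-of-two-squares structure of $S$, this forces $R\cdot S$ to be divisible either by a prime $p\equiv 5\pmod 8$ — so $D_{G}(\bm a)\in\{2^{14}p(2m+1)\}$ — or by the square of a prime $q\equiv 3\pmod 4$ — so $D_{G}(\bm a)\in\{2^{14}q^{2}(2m+1)\}$. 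The $D_{4}(\bm b)D_{4}(\bm c)$ inputs would be cited from \cite{https://doi.org/10.48550/arxiv.2211.01597} and the $F(\bm d)^{2}$ ones proved directly, as in Section~\ref{section3}.

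The main obstacle is this last step of~$(2)$. Once the valuations are tight, the cofactors $\tfrac{(b_{0}+b_{2})^{2}-(b_{1}+b_{3})^{2}}{8}$ and $\tfrac{(c_{0}+c_{2})^{2}-(c_{1}+c_{3})^{2}}{8}$ are, in isolation, arbitrary odd integers — so nothing about either alone excludes a forbidden odd part such as $\pm 3$ or $\pm 21$; it is only the coupling between the two of them, and between them and the sum-of-two-squares cofactor, forced by the integrality relations among $\bm b,\bm c,\bm d$, that yields the conclusion. Carrying that coupling cleanly through every parity sub-case (the bookkeeping modulo $4$ and $8$ being heaviest in the $b_{0}+b_{2}\equiv b_{1}+b_{3}\equiv 0$ regime) is where the work concentrates.
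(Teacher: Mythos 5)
Your part (1) is essentially the paper's argument: the paper's Lemmas~\ref{lem:4.2} and \ref{lem:4.3} record exactly the $2$-adic valuations of $D_{4}(\bm{b})D_{4}(\bm{c})$ and of $F(\bm{d})$ in the two parity regimes, together with the coupling (via $b_{0}+b_{1}+b_{2}+b_{3}$ versus $c_{0}+c_{1}+c_{2}+c_{3}$ modulo $4$, equivalently $d_{0}+d_{2}$ versus $d_{1}+d_{3}$ modulo $2$) under which a small valuation of one factor forces a large valuation of the other, giving $2^{8}\cdot 2^{6}=2^{14}$, $2^{11}\cdot 2^{4}=2^{15}$ or $2^{8}\cdot 2^{8}=2^{16}$ in the three configurations. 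Your sketch is organized by a slightly different sub-case split but is the same computation and would go through.

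Part (2) has a genuine gap, and it is precisely the one you flag at the end. The tightness $v_{2}(D_{G}(\bm{a}))=14$ pins down the congruence data ($b_{0}+b_{2}\equiv b_{1}+b_{3}\equiv 1 \pmod 2$, $(b_{0}+b_{2})(b_{1}+b_{3})\equiv(c_{0}+c_{2})(c_{1}+c_{3})\equiv\pm 3\pmod 8$, $d\equiv 2\pmod 4$), but your proposed mechanism --- pin $D_{G}(\bm{a})/2^{14}$ modulo $8$ and invoke the two-squares structure of $S$ --- cannot close the argument: the factor you call $R$ (the odd cofactors of $(b_{0}+b_{2})^{2}-(b_{1}+b_{3})^{2}$ and $(c_{0}+c_{2})^{2}-(c_{1}+c_{3})^{2}$) is an arbitrary odd integer with uncontrolled residue, so the residue of $R\cdot S$ modulo $8$ carries no usable information, and odd parts such as $3$ or $21$ are not excluded this way. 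What is actually needed (the paper's Lemma~\ref{lem:4.4}) is to locate the forced prime inside a specific factor by a case split on $b_{0}b_{3}+b_{2}b_{1}$, $c_{0}c_{3}+c_{2}c_{1}$ (and $b_{0}b_{1}+b_{2}b_{3}$, $c_{0}c_{1}+c_{2}c_{3}$) modulo $4$: if one of them is $\equiv 0\pmod 4$, then $(b_{0}-b_{2})(b_{1}-b_{3})\equiv\pm 3\pmod 8$, hence $(b_{0}-b_{2})^{2}+(b_{1}-b_{3})^{2}\equiv 10\pmod{16}$, and since this is twice a sum of two squares its odd part $\equiv 5\pmod 8$ must contain a prime $\equiv -3\pmod 8$; in the remaining case both are $\equiv 2\pmod 4$, and the coupling identity $(d_{0}-d_{2})+(d_{1}-d_{3})\equiv d+(b_{0}b_{3}+b_{2}b_{1})-(c_{0}c_{3}+c_{2}c_{1})\pmod 4$ forces $n_{0}\equiv 6\pmod 8$, so the positive integer $n_{0}/2\equiv 3\pmod 4$ has a prime factor $q\equiv -1\pmod 4$, which appears squared in $F(\bm{d})^{2}$. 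That identity linking the $\bm{b}$-, $\bm{c}$- and $\bm{d}$-data is the missing ingredient in your proposal.
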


To prove Lemma~$\ref{lem:4.1}$, 
we use the following three lemmas. 

\begin{lem}\label{lem:4.2}
The following hold: 
\begin{enumerate}
\item[$(1)$] If $b_{0} + b_{2} \equiv b_{1} + b_{3} \equiv 0 \pmod{2}$, then 
\begin{align*}
D_{4}(\bm{b}) D_{4}(\bm{c}) \in 
\begin{cases}
2^{11} \mathbb{Z}, & b_{0} + b_{1} + b_{2} + b_{3} \not\equiv c_{0} + c_{1} + c_{2} + c_{3} \pmod{4}, \\ 
2^{8} \mathbb{Z}, & b_{0} + b_{1} + b_{2} + b_{3} \equiv c_{0} + c_{1} + c_{2} + c_{3} \pmod{4}; 
\end{cases}
\end{align*}
\item[$(2)$] If $b_{0} + b_{2} \equiv b_{1} + b_{3} \equiv 1 \pmod{2}$, then 
\begin{align*}
D_{4}(\bm{b}) D_{4}(\bm{c}) \in 
\begin{cases}
2^{8} \mathbb{Z}_{\rm odd}, & (b_{0} + b_{2}) (b_{1} + b_{3}) \equiv \pm 3, \: (c_{0} + c_{2}) (c_{1} + c_{3}) \equiv \pm 3 \pmod{8}, \\ 
2^{9} \mathbb{Z}, & {\text otherwise}. 
\end{cases}
\end{align*}
\end{enumerate}
\end{lem}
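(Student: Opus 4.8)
The plan is to reduce everything to a $2$-adic valuation computation based on the factorization
\[
D_4(x_0,x_1,x_2,x_3) = \left\{ (x_0+x_2)^2 - (x_1+x_3)^2 \right\}\left\{ (x_0-x_2)^2 + (x_1-x_3)^2 \right\}
\]
used already in the proof of Lemma~$\ref{lem:2.4}$, applied to $\bm b$ and to $\bm c$. Writing $B_1 := b_0+b_2$, $B_2 := b_1+b_3$, $B_3 := b_0-b_2$, $B_4 := b_1-b_3$, and $C_1, C_2, C_3, C_4$ for the analogous combinations of the $c_i$, we get $D_4(\bm b) = (B_1^2 - B_2^2)(B_3^2 + B_4^2)$ and $D_4(\bm c) = (C_1^2 - C_2^2)(C_3^2 + C_4^2)$; moreover $B_i \equiv C_i \pmod 2$ (this follows from $b_i \equiv c_i \pmod 2$, cf.\ Remark~$\ref{rem:2.3}$), which is exactly what couples the two factors $D_4(\bm b)$ and $D_4(\bm c)$. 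It then suffices to bound $v_2(D_4(\bm b)) + v_2(D_4(\bm c))$ from below and, in the relevant sub-cases, to identify the odd parts (vanishing factors are harmless, since every claimed membership is of the form ``$\in 2^{k}\mathbb{Z}$'' or ``$\in 2^{k}\mathbb{Z}_{\rm odd}$'' with the valuation pinned to $k$).

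\emph{Case (2).} Here $B_1, B_2$ (hence $C_1, C_2$) are odd, so $B_3, B_4, C_3, C_4$ are odd as well and $v_2(B_3^2+B_4^2) = v_2(C_3^2+C_4^2) = 1$, since a sum of two odd squares is $\equiv 2 \pmod 8$. Since $B_1, B_2$ are odd, exactly one of $B_1 \pm B_2$ is $\equiv 2 \pmod 4$ and the other is $\equiv 0 \pmod 4$; a short computation (distinguishing $B_1 \equiv B_2$ from $B_1 \equiv - B_2 \pmod 4$ and reading off $B_1 B_2 \bmod 8$) shows that $v_2(B_1^2 - B_2^2) = 3$ if $B_1 B_2 \equiv \pm 3 \pmod 8$ and $v_2(B_1^2 - B_2^2) \geq 4$ otherwise. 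Hence $v_2(D_4(\bm b)) = 4$ exactly when $(b_0+b_2)(b_1+b_3) \equiv \pm 3 \pmod 8$, and $v_2(D_4(\bm b)) \geq 5$ otherwise; likewise for $\bm c$. Multiplying gives $v_2(D_4(\bm b)D_4(\bm c)) = 8$ precisely when both $(b_0+b_2)(b_1+b_3)$ and $(c_0+c_2)(c_1+c_3)$ are $\equiv \pm 3 \pmod 8$, and $\geq 9$ in every other situation, which is (2).

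\emph{Case (1).} Now $B_1, B_2$ (hence $C_1, C_2$) are even, and since $B_3 \equiv B_1$, $B_4 \equiv B_2 \pmod 2$, all eight of $B_i, C_i$ are even. Setting $\beta_i := B_i/2$ and $\gamma_i := C_i/2$ we get $D_4(\bm b) = 16(\beta_1^2 - \beta_2^2)(\beta_3^2 + \beta_4^2)$ and $D_4(\bm c) = 16(\gamma_1^2 - \gamma_2^2)(\gamma_3^2 + \gamma_4^2)$, which already yields $2^8 \mid D_4(\bm b)D_4(\bm c)$. For the refinement, note that $b_0+b_1+b_2+b_3 = 2(\beta_1+\beta_2)$ and $c_0+c_1+c_2+c_3 = 2(\gamma_1+\gamma_2)$, so the hypothesis $b_0+b_1+b_2+b_3 \not\equiv c_0+c_1+c_2+c_3 \pmod 4$ says precisely $\beta_1 + \beta_2 \not\equiv \gamma_1 + \gamma_2 \pmod 2$; by the $\bm b \leftrightarrow \bm c$ symmetry of the statement we may assume $\beta_1 \equiv \beta_2$ and $\gamma_1 \not\equiv \gamma_2 \pmod 2$, so that $4 \mid \beta_1^2 - \beta_2^2$. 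It remains to extract one more factor of $2$. If $\beta_3 \equiv \beta_4 \pmod 2$ then $2 \mid \beta_3^2 + \beta_4^2$ and we are done. If instead $\beta_3 \not\equiv \beta_4 \pmod 2$, then from $\beta_1 \equiv \beta_2$, $\beta_3 \not\equiv \beta_4$ together with $b_0 \equiv b_2$, $b_1 \equiv b_3 \pmod 2$ (which hold in Case (1)) one checks that $(b_0, b_1, b_2, b_3)$ has parity pattern $(0,1,0,1)$ or $(1,0,1,0)$; by $b_i \equiv c_i \pmod 2$ the same holds for $(c_0, c_1, c_2, c_3)$, and feeding this back together with $\gamma_1 \not\equiv \gamma_2$ forces $\gamma_3 \equiv \gamma_4 \pmod 2$, so $2 \mid \gamma_3^2 + \gamma_4^2$. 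Either way $2^{11} \mid D_4(\bm b)D_4(\bm c)$, which is (1).

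\emph{Main obstacle.} The delicate point is the refinement in Case (1): the jump from $2^8$ to $2^{11}$ is invisible to $D_4(\bm b)$ and $D_4(\bm c)$ separately and genuinely uses the coupling $b_i \equiv c_i \pmod 2$, so one must chase the parities of all eight quantities $b_i, c_i$ through the sub-cases. Alternatively, and in parallel with how Lemmas~$\ref{lem:3.4}$ and $\ref{lem:3.5}$ were obtained, one may note that $D_4(\bm b)D_4(\bm c)$ --- with these very $\bm b, \bm c$ --- is exactly the quantity analyzed in the determination of $S({\rm C}_{2}^{2} \rtimes {\rm C}_{4})$, so Lemma~$\ref{lem:4.2}$ can instead be quoted from the corresponding lemmas of \cite{https://doi.org/10.48550/arxiv.2211.01597}.
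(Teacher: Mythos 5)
Your proof is correct, and it is genuinely different from what the paper does: the paper gives no argument at all for Lemma~4.2, simply stating that it ``is immediately obtained from \cite[Lemma~5.2]{https://doi.org/10.48550/arxiv.2211.01597}'' (the ${\rm C}_{2}^{2}\rtimes{\rm C}_{4}$ paper), which is exactly the alternative you mention in your last sentence. Your self-contained $2$-adic valuation argument via the factorization $D_{4}=\{(x_0+x_2)^2-(x_1+x_3)^2\}\{(x_0-x_2)^2+(x_1-x_3)^2\}$ checks out in detail: in case (2) the identification of $v_2(B_1^2-B_2^2)=3$ with $B_1B_2\equiv\pm3\pmod 8$ (via odd squares being $1$ or $9$ mod $16$) is right, and the non-vanishing of both factors in the $2^{8}\mathbb{Z}_{\rm odd}$ subcase is guaranteed as you implicitly use; in case (1) the extraction of the eleventh factor of $2$ via the parity chase ($\beta_1+\beta_2+\beta_3+\beta_4\equiv b_0+b_1$, $\gamma_1+\gamma_2+\gamma_3+\gamma_4\equiv c_0+c_1$, and $b_i\equiv c_i\pmod 2$) is exactly the coupling needed, and the WLOG is legitimate since hypothesis, conclusion, and the coupling are all symmetric in $\bm b\leftrightarrow\bm c$. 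What your route buys is a proof readable without consulting the earlier paper; what the citation buys is brevity and consistency with how Lemmas~3.4 and 3.5 are handled. One tiny presentational quibble: it would be worth stating explicitly in case (2) that $B_1B_2\equiv\pm3\pmod 8$ forces $B_1\neq\pm B_2$ and hence $B_1^2-B_2^2\neq 0$, so that ``$\in 2^{8}\mathbb{Z}_{\rm odd}$'' (rather than merely a valuation bound) really holds.
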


Lemma~$\ref{lem:4.2}$ is immediately obtained from \cite[Lemma~5.2]{https://doi.org/10.48550/arxiv.2211.01597}. 

\begin{lem}\label{lem:4.3}
The following hold: 
\begin{enumerate}
\item[$(1)$] If $b_{0} + b_{2} \equiv b_{1} + b_{3} \equiv 0 \pmod{2}$, then 
\begin{align*}
F(\bm{d}) \in \begin{cases} 
2^{2} \mathbb{Z}_{\rm odd}, & b_{0} + b_{1} + b_{2} + b_{3} \not\equiv c_{0} + c_{1} + c_{2} + c_{3} \pmod{4}, \\ 
2^{4} \mathbb{Z}, & b_{0} + b_{1} + b_{2} + b_{3} \equiv c_{0} + c_{1} + c_{2} + c_{3} \pmod{4}; 
\end{cases}
\end{align*}
\item[$(2)$] If $b_{0} + b_{2} \equiv b_{1} + b_{3} \equiv 1 \pmod{2}$, then 
\begin{align*}
F(\bm{d}) \in \begin{cases}
2^{3} \mathbb{Z}_{\rm odd}, & d \equiv 2 \pmod{4}, \\ 
2^{4} \mathbb{Z}, & d \equiv 0 \pmod{4}, 
\end{cases}
\end{align*}
where $d := (d_{0} + d_{2}) (d_{4} + d_{6}) + (d_{1} + d_{3}) (d_{5} + d_{7})$. 
\end{enumerate}
\end{lem}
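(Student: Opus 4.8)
The statement to prove is Lemma~\ref{lem:4.3}, which computes the $2$-adic valuation (and the odd part modulo a power of $2$) of $F(\bm{d})$ under the two parity regimes for $b_{0}+b_{2}$ and $b_{1}+b_{3}$.

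\medskip

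The plan is to work directly from the definition $F(\bm{d}) = n_{0} n_{1}$, where $n_{0} = f_{0}(d_{0}-d_{2}, d_{4}-d_{6}, d_{1}-d_{3}, d_{5}-d_{7})$ and $n_{1} = f_{1}(d_{0}+d_{2}, d_{4}+d_{6}, d_{1}+d_{3}, d_{5}+d_{7})$, together with the parity translations supplied by Remark~\ref{rem:2.3}. First I would record the parity dictionary: $b_{i} \equiv c_{i} \equiv d_{i}+d_{i+4} \pmod 2$, so $b_{0}+b_{2} \equiv (d_{0}+d_{2})+(d_{4}+d_{6})$ and $b_{1}+b_{3} \equiv (d_{1}+d_{3})+(d_{5}+d_{7}) \pmod 2$. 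Hence the hypothesis of case (1), $b_{0}+b_{2}\equiv b_{1}+b_{3}\equiv 0$, says that $d_{0}+d_{2}$ and $d_{4}+d_{6}$ have the same parity and $d_{1}+d_{3}$ and $d_{5}+d_{7}$ have the same parity; the hypothesis of case (2) says each of these two pairs consists of opposite parities, so exactly one of $d_{0}+d_{2}, d_{4}+d_{6}$ is even and exactly one of $d_{1}+d_{3}, d_{5}+d_{7}$ is even.

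\medskip

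For case (1) I would split into the sub-case where all four of $d_{0}\pm d_{2}, d_{4}\pm d_{6}$-type sums are even versus the sub-case where they are all odd (these are the only possibilities given the matched parities), compute $n_{0}$ and $n_{1}$ modulo $16$ or $32$ in each, and read off the valuations; the key is that $f_{1}(x,y,z,w)=x^{2}+y^{2}-z^{2}-w^{2}$ vanishes mod $8$ when $x,y,z,w$ are all odd, forcing extra $2$'s into $n_{1}$, while $f_{0}=x^{2}+y^{2}+z^{2}+w^{2}$ contributes a controlled number of $2$'s. The discriminating condition $b_{0}+b_{1}+b_{2}+b_{3} \equiv c_{0}+c_{1}+c_{2}+c_{3} \pmod 4$ should be re-expressed, via Remark~\ref{rem:2.3} and a short computation, in terms of the $d_{i}$ (essentially a statement about $d_{0}d_{2}+d_{4}d_{6}+d_{1}d_{3}+d_{5}d_{7}$ or a similar symmetric combination modulo $2$), which is exactly what governs whether $n_{0}n_{1}$ picks up the extra factor pushing $2^{2}\mathbb{Z}_{\rm odd}$ up to $2^{4}\mathbb{Z}$. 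For case (2), since exactly one term in each of $f_{0}$ and $f_{1}$ is even, I would show $n_{0} \equiv n_{1} \equiv 2 \pmod 4$ or analyze them more finely: each of $n_{0}, n_{1}$ is a sum of three odd squares and one even square, hence $\equiv 3 + (\text{even})^2 \pmod 8$, and the precise power of $2$ in $n_{0}n_{1}$ is then controlled by $d \pmod 4$, using that $d \equiv (d_{0}+d_{2})(d_{4}+d_{6}) + (d_{1}+d_{3})(d_{5}+d_{7})$ counts, modulo $4$, the contribution of the even terms.

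\medskip

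The main obstacle I expect is bookkeeping: translating the hypotheses stated in terms of $\bm{b}, \bm{c}$ into clean conditions on $\bm{d}$, and then tracking $2$-adic valuations of $n_{0}$ and $n_{1}$ with enough precision (mod $16$ or $32$, not just mod $8$) to distinguish $2^{2}\mathbb{Z}_{\rm odd}$ from $2^{4}\mathbb{Z}$ and $2^{3}\mathbb{Z}_{\rm odd}$ from $2^{4}\mathbb{Z}$ without losing a factor of $2$ somewhere. In particular, the sign pattern in $f_{1}$ (the two minus signs) interacts with the parity sub-cases, so I would set up the computation by writing each $d_{i}\pm d_{j}$ as $2\alpha$ or $2\alpha+1$ and expanding once and for all, rather than case-splitting prematurely. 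It may also be convenient to invoke the analogous statements for $D_{4}$ already packaged in Lemma~\ref{lem:4.2} (which comes from \cite[Lemma~5.2]{https://doi.org/10.48550/arxiv.2211.01597}) to cross-check the parity conditions, since $F(\bm{d})$ and $D_{4}(\bm{b})D_{4}(\bm{c})$ are governed by the same underlying quadratic forms.
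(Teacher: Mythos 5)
Your overall strategy---translating the hypotheses into parities of $d_{0}+d_{2}$, $d_{4}+d_{6}$, $d_{1}+d_{3}$, $d_{5}+d_{7}$ via Remark~\ref{rem:2.3} and then computing $n_{0}$ and $n_{1}$ modulo small powers of $2$---is exactly the paper's, but your sketch contains concrete parity errors that would derail the computation. In case (1), the hypothesis matches parities only \emph{within} each of the pairs $\{d_{0}+d_{2},\,d_{4}+d_{6}\}$ and $\{d_{1}+d_{3},\,d_{5}+d_{7}\}$; it does not force all four sums to be simultaneously even or simultaneously odd. The mixed sub-case (one pair even, the other odd) is not only possible, it is precisely the branch yielding $2^{2}\mathbb{Z}_{\rm odd}$: there each of $n_{0}$, $n_{1}$ is built from exactly two odd and two even squares, with the two odd ones carrying the same sign in $f_{1}$, so $n_{0}\equiv n_{1}\equiv 2\pmod{4}$. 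Moreover the discriminating condition is linear, not quadratic, in the $d_{i}$: since $b_{i}+c_{i}=2(a_{i}+a_{i+8})\equiv 2d_{i}\pmod{4}$, one finds $b_{0}+b_{1}+b_{2}+b_{3}\equiv c_{0}+c_{1}+c_{2}+c_{3}\pmod{4}$ if and only if $d_{0}+d_{2}\equiv d_{1}+d_{3}\pmod{2}$, i.e.\ if and only if the two pairs share a common parity. Your guess that it amounts to a condition on $d_{0}d_{2}+d_{4}d_{6}+d_{1}d_{3}+d_{5}d_{7}$ points at the wrong invariant (that quantity is $d^{*}$, which governs the odd case in Lemmas~\ref{lem:3.4}--\ref{lem:3.7}).

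In case (2) the error is more serious. The hypothesis forces exactly one odd element in each pair, so each of $f_{0}$, $f_{1}$ receives two odd and two even arguments---not ``three odd squares and one even square'', which would make $n_{0}$ odd and contradict the statement outright. The correct picture is $n_{0}\equiv 2\pmod{4}$, while in $n_{1}$ the two odd squares occur with \emph{opposite} signs and cancel modulo $8$, so $n_{1}\equiv 0\pmod{4}$; in particular your fallback claim $n_{0}\equiv n_{1}\equiv 2\pmod{4}$ is also false (it would give only $F(\bm{d})\in 2^{2}\mathbb{Z}_{\rm odd}$, contradicting the lemma). The missing idea is the rewriting
\begin{align*}
n_{1}=(d_{0}+d_{2}+d_{4}+d_{6})^{2}-(d_{1}+d_{3}+d_{5}+d_{7})^{2}-2(d_{0}+d_{2})(d_{4}+d_{6})+2(d_{1}+d_{3})(d_{5}+d_{7}),
\end{align*}
in which the two leading terms are squares of odd numbers and cancel modulo $8$, while each product $(d_{0}+d_{2})(d_{4}+d_{6})$ and $(d_{1}+d_{3})(d_{5}+d_{7})$ is even, so that $-2(d_{0}+d_{2})(d_{4}+d_{6})\equiv +2(d_{0}+d_{2})(d_{4}+d_{6})\pmod{8}$ and hence $n_{1}\equiv 2d\pmod{8}$. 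This identity is what ties the valuation of $F(\bm{d})$ to $d\bmod 4$ and separates $2^{3}\mathbb{Z}_{\rm odd}$ from $2^{4}\mathbb{Z}$; without it, the plan stalls at $F(\bm{d})\in 2^{3}\mathbb{Z}$.
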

\begin{proof}
We prove (1). 
Let $b_{0} + b_{2} \equiv b_{1} + b_{3} \equiv 0 \pmod{2}$. 
Then, 
$$
d_{0} + d_{2} + d_{4} + d_{6} \equiv d_{1} + d_{3} + d_{5} + d_{7} \equiv 0 \pmod{2}. 
$$
Also, we have 
\begin{align*}
&b_{0} + b_{1} + b_{2} + b_{3} \equiv c_{0} + c_{1} + c_{2} + c_{3} \pmod{4} \\ 
&\qquad \iff b_{0} + b_{2} + c_{0} + c_{2} \equiv b_{1} + b_{3} + c_{1} + c_{3} \pmod{4} \\ 
&\qquad \iff d_{0} + d_{2} \equiv d_{1} + d_{3} \pmod{2}. 
\end{align*}
Therefore, 
if $b_{0} + b_{1} + b_{2} + b_{3} \not\equiv c_{0} + c_{1} + c_{2} + c_{3} \pmod{4}$, 
then $d_{0} + d_{2} \equiv d_{4} + d_{6} \not\equiv d_{1} + d_{3} \equiv d_{5} + d_{7} \pmod{2}$. 
From this, $n_{0} \equiv n_{1} \equiv 2 \pmod{4}$ holds. 
Thus we have $F(\bm{d}) = n_{0} n_{1} \in 2^{2} \mathbb{Z}_{\rm odd}$. 
On the other hand, 
if $b_{0} + b_{1} + b_{2} + b_{3} \equiv c_{0} + c_{1} + c_{2} + c_{3} \pmod{4}$, 
then $d_{0} + d_{2} \equiv d_{4} + d_{6} \equiv d_{1} + d_{3} \equiv d_{5} + d_{7} \pmod{2}$. 
From this, $n_{0} \equiv n_{1} \equiv 0 \pmod{4}$ holds. 
Thus we have $F(\bm{d}) = n_{0} n_{1} \in 2^{4} \mathbb{Z}$. 
We prove (2). 
Let $b_{0} + b_{2} \equiv b_{1} + b_{3} \equiv 1 \pmod{2}$. 
Then, 
$$
d_{0} + d_{2} + d_{4} + d_{6} \equiv d_{1} + d_{3} + d_{5} + d_{7} \equiv 1 \pmod{2}. 
$$
Thus we have $n_{0} \equiv 2 \pmod{4}$. 
Also, we have 
\begin{align*}
n_{1} 
&= (d_{0} + d_{2} + d_{4} + d_{6})^{2} - (d_{1} + d_{3} + d_{5} + d_{7})^{2} - 2 (d_{0} + d_{2}) (d_{4} + d_{6}) + 2 (d_{1} + d_{3}) (d_{5} + d_{7}) \\ 
&\equiv 2 d \pmod{8}. 
\end{align*}
Therefore, if $d \equiv 2 \pmod{4}$, then $F(\bm{d}) = n_{0} n_{1} \in 2^{3} \mathbb{Z}_{\rm odd}$, 
and if $d \equiv 0 \pmod{4}$, then $F(\bm{d}) = n_{0} n_{1} \in 2^{4} \mathbb{Z}$. 
\end{proof}

\begin{lem}\label{lem:4.4}
If $D_{G}(\bm{a}) \in 2^{14} \mathbb{Z}_{\rm odd}$, 
then 
$$
D_{G}(\bm{a}) \in \{ 2^{14} p (2 m + 1), \: 2^{14} q^{2} (2 m + 1) \mid m \in \mathbb{Z}, \: p \in P, \: q \in P' \}. 
$$
\end{lem}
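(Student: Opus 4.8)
The plan is, first, to use Lemmas~\ref{lem:4.2} and \ref{lem:4.3} to isolate the one parity situation in which $D_{G}(\bm a)=D_{4}(\bm b)D_{4}(\bm c)F(\bm d)^{2}$ is divisible by exactly $2^{14}$, and then to analyse the resulting odd cofactor. If $b_{0}+b_{2}\not\equiv b_{1}+b_{3}\pmod 2$ then $b_{0}+b_{1}+b_{2}+b_{3}$ is odd, so $D_{G}(\bm a)$ is odd by Lemmas~\ref{lem:2.4} and \ref{lem:2.5}; hence we may assume $b_{0}+b_{2}\equiv b_{1}+b_{3}\pmod 2$ and invoke Lemmas~\ref{lem:4.2} and \ref{lem:4.3}. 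Running through the four combinations of their cases: if $b_{0}+b_{2}\equiv b_{1}+b_{3}\equiv 0\pmod 2$, then either $D_{4}(\bm b)D_{4}(\bm c)\in 2^{11}\mathbb Z$ and $F(\bm d)^{2}\in 2^{4}\mathbb Z_{\mathrm{odd}}$, or $D_{4}(\bm b)D_{4}(\bm c)\in 2^{8}\mathbb Z$ and $F(\bm d)^{2}\in 2^{8}\mathbb Z$, so $2^{15}\mid D_{G}(\bm a)$; and if $b_{0}+b_{2}\equiv b_{1}+b_{3}\equiv 1\pmod 2$, then $2^{15}\mid D_{G}(\bm a)$ unless $(b_{0}+b_{2})(b_{1}+b_{3})\equiv(c_{0}+c_{2})(c_{1}+c_{3})\equiv\pm 3\pmod 8$ \emph{and} $d\equiv 2\pmod 4$, in which case $D_{4}(\bm b)D_{4}(\bm c)\in 2^{8}\mathbb Z_{\mathrm{odd}}$ and $F(\bm d)\in 2^{3}\mathbb Z_{\mathrm{odd}}$. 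Thus $D_{G}(\bm a)\in 2^{14}\mathbb Z_{\mathrm{odd}}$ forces precisely this last situation, and it remains to treat it.

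In that situation $b_{0}\pm b_{2}$ and $b_{1}\pm b_{3}$ are all odd, so by the identity $D_{4}(x_{0},x_{1},x_{2},x_{3})=\bigl((x_{0}+x_{2})^{2}-(x_{1}+x_{3})^{2}\bigr)\bigl((x_{0}-x_{2})^{2}+(x_{1}-x_{3})^{2}\bigr)$ from the proof of Lemma~\ref{lem:2.4}, we may write $D_{4}(\bm b)=16\,u'\,m_{b}$ and $D_{4}(\bm c)=16\,v'\,m_{c}$, where $u':=\tfrac{1}{8}\bigl((b_{0}+b_{2})^{2}-(b_{1}+b_{3})^{2}\bigr)$ and $v':=\tfrac{1}{8}\bigl((c_{0}+c_{2})^{2}-(c_{1}+c_{3})^{2}\bigr)$ are odd integers (since $(b_{0}+b_{2})(b_{1}+b_{3})\equiv(c_{0}+c_{2})(c_{1}+c_{3})\equiv\pm 3\pmod 8$), while $m_{b}:=\bigl(\tfrac{(b_{0}-b_{2})+(b_{1}-b_{3})}{2}\bigr)^{2}+\bigl(\tfrac{(b_{0}-b_{2})-(b_{1}-b_{3})}{2}\bigr)^{2}$ and the analogous $m_{c}$ are odd sums of two integer squares. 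Writing $F(\bm d)=8w$ with $w$ odd, we obtain $N:=D_{G}(\bm a)/2^{14}=u'v'\cdot S$, where $S:=m_{b}\,m_{c}\,w^{2}$ is a product of sums of two squares, hence itself a sum of two squares; in particular every prime $\equiv 3\pmod 4$ divides $S$ to an even power.

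It then remains to show that $N=u'v'S$ always lies in $\{p(2m+1),\,q^{2}(2m+1):m\in\mathbb Z,\ p\in P,\ q\in P'\}$. If some prime $\equiv 5\pmod 8$ divides $N$, or some prime $\equiv 3\pmod 4$ divides $N$ to a power $\ge 2$, this is immediate; so suppose not. Then, using that $S$ is a sum of two squares, $S$ must be a product of primes $\equiv 1\pmod 8$, and $u'v'$ has no prime factor $\equiv 5\pmod 8$ and no repeated prime factor $\equiv 3\pmod 4$. The heart of the matter is to rule this out, and for this one must use that $(\bm b,\bm c,\bm d)$ arises from an integer vector $\bm a$: with $v_{i}:=a_{i}+a_{i+8}$ one has $b_{i}=v_{i}+v_{i+4}$, $c_{i}=v_{i}-v_{i+4}$ for $0\le i\le 3$ and $d_{i}\equiv v_{i}\pmod 2$ for $0\le i\le 7$, so the residues of $\bm b,\bm c$ modulo $4$ are tied to the parities of the $d_{j}$. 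Combining this with the remaining constraints $d\equiv 2\pmod 4$ and $F(\bm d)\in 2^{3}\mathbb Z_{\mathrm{odd}}$ — which pin down the small-scale behaviour of $\bm d$, hence of $n_{0}$, $n_{1}$ and $w$ — and with the explicit description of vectors $\bm x$ satisfying $(x_{0}+x_{2})(x_{1}+x_{3})\equiv\pm 3\pmod 8$, following the case analysis of \cite{https://doi.org/10.48550/arxiv.2211.01597} already used for the $D_{4}(\bm b)D_{4}(\bm c)$ factor, one should reach the required contradiction. I expect this last step to be the main obstacle, since it forces one to reconcile purely $2$-adic information with the multiplicative (sum-of-two-squares) structure of $N$; a possible shortcut is to exploit $D_{G}(\bm a)=D_{4\times 2}(a_{0}+a_{8},\dots,a_{7}+a_{15})\cdot(n_{0}n_{1})^{2}$ together with the known determination of the integer group determinants of ${\rm C}_{4}\times{\rm C}_{2}$.
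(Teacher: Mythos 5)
Your reduction to the case $b_{0}+b_{2}\equiv b_{1}+b_{3}\equiv 1 \pmod 2$, $D_{4}(\bm b)D_{4}(\bm c)\in 2^{8}\mathbb Z_{\rm odd}$, $F(\bm d)\in 2^{3}\mathbb Z_{\rm odd}$ (equivalently $(b_{0}+b_{2})(b_{1}+b_{3})\equiv\pm3$, $(c_{0}+c_{2})(c_{1}+c_{3})\equiv\pm3\pmod 8$ and $d\equiv 2\pmod 4$) is correct and matches the paper, as does your factorization $D_{G}(\bm a)/2^{14}=u'v'\,m_{b}m_{c}w^{2}$. But the proof stops exactly where the real content begins: you never establish that this odd cofactor must contain either a prime factor $p\equiv-3\pmod 8$ or the square of a prime $q\equiv-1\pmod 4$, and you say yourself that you ``expect this last step to be the main obstacle.'' A sketch of constraints that ``should'' yield a contradiction is not a proof, and the suggested shortcut via $S({\rm C}_{4}\times{\rm C}_{2})$ does not obviously control the interaction between the $D_{4\times2}$ factor and the $F^{2}$ factor. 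So there is a genuine gap.

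For comparison, the paper closes this gap by a finite case analysis on $(b_{0}b_{3}+b_{1}b_{2},\,c_{0}c_{3}+c_{1}c_{2})$ and $(b_{0}b_{1}+b_{2}b_{3},\,c_{0}c_{1}+c_{2}c_{3})$ modulo $4$ (every configuration falls into at least one of the four cases). When, say, $b_{0}b_{3}+b_{1}b_{2}\equiv 0\pmod 4$, the identity $(b_{0}-b_{2})(b_{1}-b_{3})=(b_{0}+b_{2})(b_{1}+b_{3})-2(b_{0}b_{3}+b_{1}b_{2})$ forces $(b_{0}-b_{2})^{2}+(b_{1}-b_{3})^{2}\equiv 10\pmod{16}$; a sum of two squares whose odd part is $\equiv 5\pmod 8$ must have a prime factor $\equiv 5\pmod 8$, i.e.\ in $P$, so $D_{4}(\bm b)D_{4}(\bm c)$ supplies the factor $p$. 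In the complementary cases, a congruence from the cited Lemma~2.10~(2) of the earlier paper gives $(d_{0}-d_{2})+(d_{1}-d_{3})\equiv 2\pmod 4$, hence $n_{0}\equiv 6\pmod 8$ with $n_{0}\geq 0$, so $n_{0}$ has a prime factor $q\equiv -1\pmod 4$ and $F(\bm d)^{2}$ supplies the factor $q^{2}$. Some argument of this kind, explicitly locating the prime in one of the two factors, is what your write-up is missing.
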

\begin{proof}
From Lemmas~$\ref{lem:4.2}$ and $\ref{lem:4.3}$, we have 
\begin{align*}
b_{0} + b_{2} \equiv b_{1} + b_{3} \equiv 1 \pmod{2}, \quad D_{4}(\bm{b}) D_{4}(\bm{c}) \in 2^{8} \mathbb{Z}_{\rm odd}, \quad F(\bm{d}) \in 2^{3} \mathbb{Z}_{\rm odd}. 
\end{align*}
That is, 
\begin{align*}
(b_{0} + b_{2}) (b_{1} + b_{3}) \equiv 1 \pmod{2}, \quad (c_{0} + c_{2}) (c_{1} + c_{3}) \equiv \pm 3 \pmod{8}, \quad d \equiv 2 \pmod{4}, 
\end{align*}
where $d := (d_{0} + d_{2}) (d_{4} + d_{6}) + (d_{1} + d_{3}) (d_{5} + d_{7})$. 
We divide the proof into the following cases: 
\begin{enumerate}
\item[(i)] $(b_{0} b_{3} + b_{2} b_{1}, c_{0} c_{3} + c_{2} c_{1}) \equiv (0, 0), \: \: (0, 2) \: \: \text{or} \: \: (2, 0) \pmod{4}$; 
\item[(ii)] $(b_{0} b_{3} + b_{2} b_{1}, c_{0} c_{3} + c_{2} c_{1}) \equiv (2, 2) \pmod{4}$; 
\item[(iii)] $(b_{0} b_{1} + b_{2} b_{3}, c_{0} c_{1} + c_{2} c_{3}) \equiv (0, 0), \: \: (0, 2) \: \: \text{or} \: \: (2, 0) \pmod{4}$; 
\item[(iv)] $(b_{0} b_{1} + b_{2} b_{3}, c_{0} c_{1} + c_{2} c_{3}) \equiv (2, 2) \pmod{4}$. 
\end{enumerate}
First, we consider the case (i). 
Suppose that $b_{0} b_{3} + b_{2} b_{1} \equiv 0 \pmod{4}$. 
Then 
$$
(b_{0} - b_{2}) (b_{1} - b_{3}) = (b_{0} + b_{2}) (b_{1} + b_{3}) - 2 (b_{0} b_{3} + b_{2} b_{1}) \equiv \pm 3 \pmod{8}. 
$$
Thus, $(b_{0} - b_{2})^{2} + (b_{1} - b_{3})^{2} \equiv 10 \pmod{16}$. 
It implies that $(b_{0} - b_{2})^{2} + (b_{1} - b_{3})^{2}$ has at least one prime factor of the form $8 k - 3$. 
That is, $D_{4}(\bm{b})$ has at least one prime factor of the form $8 k - 3$. 
In the same way, we can prove that $D_{4}(\bm{c})$ has at least one prime factor of the form $8 k - 3$ when $c_{0} c_{3} + c_{2} c_{1} \equiv 0 \pmod{4}$. 
From the above, in the case~(i), it holds that $D_{4}(\bm{b}) D_{4}(\bm{c})$ has at least one prime factor of the form $8 k - 3$. 
We can obtain the same conclusion for the case~(iii). 
Next, we consider the case~(ii). 
We prove that $F(\bm{d})$ has at least one prime factor of the form $4 k - 1$. 
From $b_{0} + b_{2} \equiv b_{1} + b_{3} \equiv 1 \pmod{2}$, 
we have $d_{0} + d_{2} + d_{4} + d_{6} \equiv d_{1} + d_{3} + d_{5} + d_{7} \equiv 1 \pmod{2}$. 
Also, since $F(\bm{d})$ is invariant under the each replacing $(d_{0}, d_{2}) \leftrightarrow (d_{4}, d_{6})$ and $(d_{1}, d_{3}) \leftrightarrow (d_{5}, d_{7})$, 
we may assume without loss of generality that $d_{0} + d_{2} \equiv d_{1} + d_{3} \equiv 0$, $d_{4} + d_{6} \equiv d_{5} + d_{7} \equiv 1 \pmod{2}$. 
Then, we have 
\begin{align*}
(d_{0} - d_{2}) + (d_{1} - d_{3}) 
&\equiv (d_{0} - d_{2}) (d_{5} - d_{7}) + (d_{1} - d_{3}) (d_{4} - d_{6}) \\ 
&\equiv (d_{0} + d_{2}) (d_{5} + d_{7}) + (d_{1} + d_{3}) (d_{4} + d_{6}) \\ 
&\qquad + 2 (d_{0} d_{7} + d_{2} d_{5} + d_{4} d_{3} + d_{6} d_{1}) \\ 
&\equiv d + (b_{0} b_{3} + b_{2} b_{1}) - (c_{0} c_{3} + c_{2} c_{1}) \\ 
&\equiv 2 \pmod{4}
\end{align*}
from \cite[Lemma~2.10 (2)]{https://doi.org/10.48550/arxiv.2211.01597}. 
Therefore, $n_{0} \equiv 6 \pmod{8}$ holds. 
From this and $n_{0} \neq - 2$, 
it holds that $n_{0}$ has at least one prime factor of the form $4 k - 1$. 
We can obtain the same conclusion for the case~(iv). 
\end{proof}

\begin{proof}[Proof of Lemma~$\ref{lem:4.1}$]
Suppose that $D_{G}(\bm{a}) = D_{4}(\bm{b}) D_{4}(\bm{c}) F(\bm{d})^{2} \in 2 \mathbb{Z}$. 
Then, from Lemmas~$\ref{lem:2.4}$ (1) and $\ref{lem:2.5}$, 
we have $b_{0} + b_{2} \equiv b_{1} + b_{3} \pmod{2}$. 
Therefore, from Lemmas~$\ref{lem:4.2}$ and $\ref{lem:4.3}$, we obtain (1). 
Also, from Lemma~$\ref{lem:4.4}$, we obtain (2). 
\end{proof}

\section{Possible integers}

In this section, 
we determine all possible integers. 
Lemmas~$\ref{lem:3.1}$ and $\ref{lem:4.1}$ imply that $S \left( {\rm C}_{4} \rtimes {\rm C}_{4} \right)$ does not include every integer that is not mentioned in the following Lemmas~$\ref{lem:5.1}$--$\ref{lem:5.4}$. 

\begin{lem}\label{lem:5.1}
For any $m \in \mathbb{Z}$, 
the following are elements of $S({\rm C}_{4}\rtimes {\rm C}_{4})$: 
\begin{enumerate}
\item[$(1)$] $16 m + 1$; 
\item[$(2)$] $2^{15} (2 m + 1)$; 
\item[$(3)$] $2^{16} m$. 
\end{enumerate}
\end{lem}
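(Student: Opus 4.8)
The plan is to produce, for each prescribed value, an explicit integer vector $\bm{a}$ realizing it through the factorization $D_{G}(\bm{a}) = D_{4}(\bm{b}) D_{4}(\bm{c}) F(\bm{d})^{2}$ obtained just before Remark~$\ref{rem:2.3}$ (equivalently, via Corollary~$\ref{cor:2.1}$, $D_{G}(\bm{a}) = D_{4\times 2}(a_{0}+a_{8},\dots,a_{7}+a_{15})F(\bm{d})^{2}$). I will use throughout the closed form $D_{4}(x_{0},x_{1},x_{2},x_{3}) = \{(x_{0}+x_{2})^{2} - (x_{1}+x_{3})^{2}\}\{(x_{0}-x_{2})^{2} + (x_{1}-x_{3})^{2}\}$ recorded in the proof of Lemma~$\ref{lem:2.4}$, together with the identities
\begin{align*}
D_{4}(\mu+1,\mu,\mu,\mu-1) &= \bigl((2\mu+1)^{2}-(2\mu-1)^{2}\bigr)(1+1) = 16\mu, \\
D_{4}(2m+2,2m+1,2m+1,2m) &= \bigl((4m+3)^{2}-(4m+1)^{2}\bigr)(1+1) = 2^{4}(2m+1),
\end{align*}
which furnish factors depending linearly on the parameter. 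It is also convenient to note that, writing $\sigma_{i} := a_{i}+a_{i+8}$, one has $b_{i} = \sigma_{i}+\sigma_{i+4}$, $c_{i} = \sigma_{i}-\sigma_{i+4}$ and $d_{i} = a_{i}-a_{i+8}$, so a triple $(\bm{b},\bm{c},\bm{d})$ arises from an integer $\bm{a}$ precisely when $b_{i}\equiv c_{i}\pmod 2$ and $b_{i}+c_{i}\equiv 2d_{i}$, $b_{i}-c_{i}\equiv 2d_{i+4}\pmod 4$ for $0\le i\le 3$.

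For (1) the construction is immediate: put $a_{e}=m+1$ and $a_{g}=m$ for $g\ne e$. Then the group matrix $(a_{gh^{-1}})$ equals $mJ+I$ with $J$ the all-ones $16\times 16$ matrix, whose eigenvalues are $16$ (once) and $0$ (fifteen times), so $D_{G}(\bm{a}) = 16m+1$. In terms of the factorization this is $\bm{b}=(4m+1,4m,4m,4m)$, $\bm{c}=(1,0,0,0)$, $\bm{d}=(1,0,\dots,0)$, with $D_{4}(4m+1,4m,4m,4m) = \bigl((8m+1)^{2}-(8m)^{2}\bigr)\cdot 1 = 16m+1$ and $D_{4}(\bm{c}) = F(\bm{d}) = 1$.

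For (3) I would take $\bm{b}=(\mu+1,\mu,\mu,\mu-1)$, so $D_{4}(\bm{b})=16\mu$, and then choose $\bm{c}$ with $D_{4}(\bm{c})=\pm 16$ and $\bm{d}$ with $F(\bm{d})=\pm 16$ (so $F(\bm{d})^{2}=2^{8}$); this gives $D_{G}(\bm{a}) = 16\mu\cdot(\pm 16)\cdot 2^{8} = \pm 2^{16}\mu$, which runs over $2^{16}\mathbb{Z}$ as $\mu$ runs over $\mathbb{Z}$. Small vectors with $D_{4}(\bm{c})=\pm 16$ (e.g. $(2,0,0,0)$ or $(1,0,2,1)$) and with $F(\bm{d})=\pm 16$ (e.g. $(2,0,\dots,0)$ or $(1,0,2,1,0,0,0,0)$) are easy to write down. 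Part (2) is entirely parallel: take instead $\bm{b}=(2m+2,2m+1,2m+1,2m)$, so $D_{4}(\bm{b})=2^{4}(2m+1)$, and choose $\bm{c}$ with $D_{4}(\bm{c})=\pm 2^{5}$ and $\bm{d}$ with $F(\bm{d})=\pm 2^{3}$, giving $D_{G}(\bm{a}) = 2^{4}(2m+1)\cdot(\pm 2^{5})\cdot 2^{6} = \pm 2^{15}(2m+1)$, which exhausts $2^{15}\mathbb{Z}_{\rm odd}$; candidate frozen vectors here are $\bm{c}=(2,1,3,2)$ (with $D_{4}=2^{5}$) and $\bm{d}=(1,1,0,0,1,0,1,0)$ (with $F=2^{3}$). (One could also split the parameter across $D_{4}(\bm{b})$ and $D_{4}(\bm{c})$ and use that $(2m+1)(2n+1)$ runs over all odd integers.)

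The genuinely delicate point is that the parity of $\bm{b}$ depends on the parameter, and then the congruence linkage above pins the residues of $\bm{c}$ and $\bm{d}$ modulo $4$; so the frozen small vectors must be chosen case by case according to the parameter modulo $4$, and one must verify, in each of the finitely many cases, that a pair $(\bm{c},\bm{d})$ exists meeting simultaneously the value condition and the imposed residues (the value conditions $D_{4}(\bm{c})=\pm 16$, $F(\bm{d})=\pm 2^{3}$, etc., are very rigid, so this existence check is not automatic). I expect this case-checking — rather than the determinant algebra — to be the main obstacle, together with, for (2), confirming that the $2$-adic valuations add to \emph{exactly} $15$: in the relevant parity regime one must ensure $D_{4}(\bm{b})D_{4}(\bm{c})$ lies in $2^{9}\mathbb{Z}$ with valuation exactly $9$, avoiding the $2^{8}\mathbb{Z}_{\rm odd}$ alternative of Lemma~$\ref{lem:4.2}$(2) (which holds since $(b_{0}+b_{2})(b_{1}+b_{3})\equiv 3$ while $(c_{0}+c_{2})(c_{1}+c_{3})\not\equiv\pm 3\pmod 8$), and $F(\bm{d})^{2}\in 2^{6}\mathbb{Z}_{\rm odd}$, the $d\equiv 2\pmod 4$ case of Lemma~$\ref{lem:4.3}$(2).
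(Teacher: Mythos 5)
Your overall strategy --- realizing each value through the factorization $D_{G}(\bm{a}) = D_{4}(\bm{b})D_{4}(\bm{c})F(\bm{d})^{2}$ by prescribing the three factors --- is the same as the paper's, and part (1) is complete and correct (both via the $mJ+I$ eigenvalue computation and via the factorization, which matches the paper's choice exactly). The compatibility congruences you state for when a triple $(\bm{b},\bm{c},\bm{d})$ lifts to an integer vector $\bm{a}$, namely $b_{i}\equiv c_{i}\pmod 2$ and $b_{i}\pm c_{i}\equiv 2d_{i}, 2d_{i+4}\pmod 4$, are also correct.

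The gap is in parts (2) and (3): you never actually produce the integer vectors. You specify target values ($D_{4}(\bm{b})=16\mu$ or $2^{4}(2m+1)$, $D_{4}(\bm{c})=\pm 2^{4}$ or $\pm 2^{5}$, $F(\bm{d})=\pm 2^{4}$ or $\pm 2^{3}$) and candidate frozen vectors, but these candidates do not satisfy the compatibility conditions uniformly in the parameter: for (3) with $\bm{b}=(\mu+1,\mu,\mu,\mu-1)$ the required parities of $\bm{c}$ flip with the parity of $\mu$ (neither $(2,0,0,0)$ nor $(1,0,2,1)$ works for both parities), and for (2) with $\bm{b}=(2m+2,2m+1,2m+1,2m)$ the mod-$4$ conditions linking $\bm{b},\bm{c},\bm{d}$ depend on $m\bmod 2$. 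You acknowledge all of this and explicitly defer the case-by-case existence check, conceding it ``is not automatic.'' But that existence check is the entire content of the lemma; without exhibiting, for each residue class of the parameter, a compatible pair $(\bm{c},\bm{d})$ attaining the exact prescribed values, the proof is not done. The paper avoids the case split by choosing $\bm{b}$ with entries constant modulo $4$ --- e.g.\ for (2) it uses $\bm{b}=(4m+2,4m+2,4m+3,4m+1)$ with $D_{4}(\bm{b})=2^{5}(2m+1)$, paired with $D_{4}(2,0,1,1)=2^{4}$ and $F(\bm{d})^{2}=2^{6}$ --- so a single explicit sixteen-tuple $\bm{a}$ works for all $m$ and the value is computed outright. (Your closing appeal to Lemmas~\ref{lem:4.2} and \ref{lem:4.3} is also superfluous: once explicit vectors are in hand the determinant is evaluated exactly and no $2$-adic valuation bookkeeping is required.)
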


\begin{lem}\label{lem:5.2}
For any $k, m \in \mathbb{Z}$ with $m \geq 1$ and $16 l - 3, 16 l + 5 \in P$, 
the following are elements of $S({\rm C}_{4}\rtimes {\rm C}_{4})$: 
\begin{enumerate}
\item[$(1)$] $(16 k - 3) (16 l - 3) (8 m - 1)^{2}$; 
\item[$(2)$] $(16 k + 5) (16 l + 5) (8 m - 1)^{2}$; 
\item[$(3)$] $(16 k + 5) (16 l - 3) (8 m + 3)^{2}$; 
\item[$(4)$] $(16 k - 3) (16 l + 5) (8 m + 3)^{2}$. 
\end{enumerate}
\end{lem}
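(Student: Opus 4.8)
The plan is to exhibit, for each of the four target values, an explicit integer tuple $\bm a = (a_0,\dots,a_{15})$ realizing it as $D_G(\bm a) = D_4(\bm b)\,D_4(\bm c)\,F(\bm d)^2$. Since the three factors are independent functions of the three "coordinate blocks" $\bm b$, $\bm c$ and $\bm d$ that are themselves subject only to the parity linkage of Remark~\ref{rem:2.3} (namely $b_i \equiv c_i \equiv d_i + d_{i+4} \pmod 2$), I would first solve the problem in each block separately and then patch the blocks together through that congruence. Concretely: given a target $(8l-3)$-type prime factor and a $(16k+\epsilon)$-type cofactor, I want to choose $\bm b$ with $D_4(\bm b) = \pm(16k+\epsilon)$ (or the relevant piece of it), $\bm c$ with $D_4(\bm c) = \pm(16l-3)$-type value, and $\bm d$ with $F(\bm d) = \pm(8m-1)$ or $\pm(8m+3)$ as the case demands, with all signs arranged so the product comes out positive.

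The workhorse for the $D_4$-blocks is the factorization already recorded in the proof of Lemma~\ref{lem:2.4},
\[
D_4(x_0,x_1,x_2,x_3) = \bigl\{(x_0+x_2)^2-(x_1+x_3)^2\bigr\}\bigl\{(x_0-x_2)^2+(x_1-x_3)^2\bigr\},
\]
together with the $F$-block factorization $F(\bm d) = n_0 n_1$ with
$n_0 = f_0(d_0-d_2,d_4-d_6,d_1-d_3,d_5-d_7)$ and $n_1 = f_1(d_0+d_2,d_4+d_6,d_1+d_3,d_5+d_7)$. The idea is to make one of the two bracketed factors (resp.\ one of $n_0,n_1$) equal to $\pm1$ by a suitable small choice of the relevant differences/sums, so that $D_4$ (resp.\ $F$) collapses to a single quadratic form $u^2\pm v^2$ or $u^2+v^2$ of one's choosing; then the classical facts that every $\pm3\pmod 8$ prime has a representation tied to $u^2-2v^2$-type forms, and every $\equiv 1\pmod 4$ integer (in particular any $(8m-1)^2$ or $(8m+3)^2$ as a perfect square, or as $u^2+v^2$) is representable, let me hit the four prescribed shapes. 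For part~(1), e.g., I expect to take $\bm c$ so that $D_4(\bm c)$ is $\pm(16l-3)$ with $16l-3\in P$, $\bm b$ so that $D_4(\bm b)$ is $\pm(16k-3)$, and $\bm d$ so that $F(\bm d) = \pm(8m-1)$, checking the residues mod $16$ against Lemma~\ref{lem:3.4} and Lemma~\ref{lem:3.6} for consistency; parts (2)--(4) are the analogous constructions with the $\pm3$ vs.\ $+5$ and $8m-1$ vs.\ $8m+3$ bookkeeping swapped, which is exactly where the parity linkage of Remark~\ref{rem:2.3} must be re-verified each time.

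The main obstacle is not any single computation but the simultaneous bookkeeping: the three blocks are coupled mod $2$ via Remark~\ref{rem:2.3}, and the $8k-3$ vs.\ $8k+5$ alternative together with the $m$-parity in $(8m-1)^2$ vs.\ $(8m+3)^2$ is forced by the mod-$16$ identities of Section~3 (cf.\ the set $A'$ in the proof of Lemma~\ref{lem:3.1}), so the explicit tuples must be chosen to respect all of these at once. I would therefore organize the proof as: (a) fix a parametric family of tuples depending on $k,l,m$; (b) compute $\bm b,\bm c,\bm d$ and then $D_4(\bm b), D_4(\bm c), F(\bm d)$ symbolically; (c) verify the product equals the claimed value and that the parity/mod-$16$ constraints hold identically in $k,l,m$; and (d) note that the needed primality of $16l-3$ (and $16l+5$) is a hypothesis, so no existence-of-primes argument is required — only that the construction is available whenever such $l$ exists. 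The routine quadratic-form algebra in step (b)--(c) I would carry out but not belabor here; the design of the families in step (a) is the real content.
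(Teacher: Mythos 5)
Your high-level plan does match the shape of the paper's argument: the paper also exhibits explicit parametric tuples $\bm a$, arranges for one factor of each $D_4$-block and one of $n_0,n_1$ to be $\pm 1$, and realizes the remaining factors by quadratic-form representations. But the proposal stops short of the actual content, and the two places where you do commit to specifics contain errors that would derail the execution. First, the number-theoretic inputs you name are not the right ones. A prime $p\equiv -3\pmod 8$ is \emph{not} represented by $u^2-2v^2$ (that form represents only odd values $\equiv\pm1\pmod 8$), and ``every integer $\equiv 1\pmod 4$ is a sum of two squares'' is false for composites. What the paper actually needs, and states as (i)--(iv), are refined representations whose residues track the mod-$16$ case split of the lemma: primes $16l-3\in P$ are $(8r+3)^2+(8s+2)^2$, primes $16l+5\in P$ are $(8r+1)^2+(8s+2)^2$, and $8m-1$ (resp.\ $8m+3$) is a sum of four squares with three parts $\equiv 1\pmod 4$ and one part $\equiv 2\pmod 4$ (resp.\ $\equiv 0\pmod 4$). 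These refinements are precisely what separates cases (1),(4) from (2),(3); without identifying them the ``bookkeeping'' you defer cannot be done.

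Second, the coupling between the blocks is stronger than the relation $b_i\equiv c_i\equiv d_i+d_{i+4}\pmod 2$ of Remark~\ref{rem:2.3} that you rely on. From $a_i+a_{i+8}=(b_i+c_i)/2$ and $a_{i+4}+a_{i+12}=(b_i-c_i)/2$ one gets the individual constraints $d_i\equiv (b_i+c_i)/2$ and $d_{i+4}\equiv(b_i-c_i)/2\pmod 2$, so the parities of $d_i$ and $d_{i+4}$ separately are dictated by $b_i$ and $c_i$ modulo $4$. Choosing the three blocks ``independently'' and then patching only through Remark~\ref{rem:2.3} is therefore not legitimate; this is exactly why, in the odd case, $n_0\equiv -1\pmod 4$ is forced (so $n_0$ cannot be made equal to $1$ and must carry the $(8m\mp1)$-content as a sum of four squares, while $n_1=\pm1$), a constraint your sketch does not confront. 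Since the explicit families of $a_0,\dots,a_{15}$ and the verification that they respect all of these simultaneous congruences are the entire substance of the paper's proof, and you explicitly leave both to be ``carried out but not belabored,'' the proposal as written is an outline with incorrect supporting claims rather than a proof.
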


\begin{lem}\label{lem:5.3}
For any $m \in \mathbb{Z}$ and $p \in P$, 
we have $2^{14} p (2 m + 1) \in S({\rm C}_{4}\rtimes {\rm C}_{4})$. 
\end{lem}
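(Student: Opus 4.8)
The plan is to exhibit, for each integer of the form $2^{14} p (2m+1)$ with $p \in P$ and $m \in \mathbb{Z}$, an explicit choice of $\bm{a} = (a_0, a_1, \ldots, a_{15}) \in \mathbb{Z}^{16}$ realizing it as $D_G(\bm{a}) = D_4(\bm{b}) D_4(\bm{c}) F(\bm{d})^2$. Since we need the $2$-adic valuation to be exactly $14$ and the odd part to be $p(2m+1)$, the cleanest route is to aim for the regime identified in Lemma~\ref{lem:4.4}: force $b_0 + b_2 \equiv b_1 + b_3 \equiv 1 \pmod 2$ with $D_4(\bm{b})$ carrying a factor $2^8$ times an odd number divisible by a prime $\equiv -3 \pmod 8$, $D_4(\bm{c})$ odd, and $F(\bm{d}) \in 2^3 \mathbb{Z}_{\mathrm{odd}}$, so that $D_G(\bm{a}) = (2^8 \cdot \text{odd}) \cdot (\text{odd}) \cdot (2^3 \cdot \text{odd})^2 \in 2^{14}\mathbb{Z}_{\mathrm{odd}}$.

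First I would decouple the three factors. Because $\bm{b}, \bm{c}$ depend only on the sums $a_i + a_{i+8}$ (split by the $\pm(a_{i+4}+a_{i+12})$ pattern) while $\bm{d}$ depends only on the differences $a_i - a_{i+8}$, one can independently prescribe $\bm{b}, \bm{c}$ (any integer quadruples with $b_i \equiv c_i \pmod 2$) and then separately prescribe $\bm d$ subject to the parity constraint $d_i + d_{i+4} \equiv b_i \pmod 2$ from Remark~\ref{rem:2.3}, solving back for $\bm{a}$ over $\mathbb{Z}$. So the task reduces to: (a) choose $\bm{c}$ with $D_4(\bm{c}) = 1$ (e.g.\ $\bm c = (1,0,0,0)$ gives $D_4 = 1$); (b) choose $\bm{b}$ with $b_0 + b_2 \equiv b_1 + b_3 \equiv 1 \pmod 2$ and $D_4(\bm{b})$ equal to $2^8$ times the desired odd multiple of $p$; (c) choose $\bm{d}$ (compatibly) with $F(\bm{d}) = 2^3 \cdot (\text{controllable odd number})$, and then adjust the free odd part $2m+1$ by multiplying into whichever of the factors is easiest to scale. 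Using $D_4(x_0,x_1,x_2,x_3) = \{(x_0+x_2)^2 - (x_1+x_3)^2\}\{(x_0-x_2)^2 + (x_1-x_3)^2\}$ from Lemma~\ref{lem:2.4}, and the analogous $F(\bm d) = n_0 n_1$ factorization with $n_1$ a difference of two squares and $n_0$ a sum of two squares, the number-theoretic content is: represent $p$ (or a prescribed multiple) as a value of an indefinite binary form $u^2 - v^2$ with the right $2$-adic shape, and handle the $(8m-1)^2$-type or $(2m+1)$-type free parameters by the standard trick of scaling one variable.

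The key steps in order: (1) record the bijection $\bm a \leftrightarrow (\bm b, \bm c, \bm d)$ and its integrality, isolating the parity conditions; (2) pick $\bm c$ with $D_4(\bm c) = \pm 1$; (3) realize $D_4(\bm b) = 2^8 \cdot p \cdot (\text{odd})$ by writing $b_0 + b_2 = 2^k u$, $b_1 + b_3 = 2^k v$ and $(b_0 - b_2), (b_1 - b_3)$ odd so the second factor is $\equiv 2 \pmod 4$, choosing $u, v$ so that $u^2 - v^2$ supplies the prime $p \equiv -3 \pmod 8$ (note $p = \bigl(\tfrac{p+1}{2}\bigr)^2 - \bigl(\tfrac{p-1}{2}\bigr)^2$ with the parities of $\tfrac{p\pm1}{2}$ controllable modulo small powers of $2$ since $p \equiv 5 \pmod 8$); (4) realize $F(\bm d) = 8 \cdot (\text{odd})$ with a compatible parity pattern, using the $n_0 n_1$ split and Lemma~\ref{lem:4.3}(2) as a guide to which residue of $d \pmod 4$ to hit; (5) absorb the arbitrary odd factor $2m+1$ by an overall scaling of one of the quadruples, and finally (6) assemble $\bm a$ and verify the product equals $2^{14} p (2m+1)$, checking the power of $2$ is exactly $14$ not more. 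The main obstacle I anticipate is step (3)–(4): simultaneously matching the prescribed $2$-adic valuations \emph{exactly} (not just as lower bounds) while keeping the odd part fully under control and respecting the coupling parity $d_i + d_{i+4} \equiv b_i \pmod 2$ — in particular ensuring that the "odd" second factor of $D_4(\bm b)$ really is a unit mod the relevant power of $2$ so that no extra factor of $2$ sneaks in, and that the prime $p$ (rather than a proper multiple) is produced. I would lean on the explicit $D_4$ and $F$ factorizations plus the congruence bookkeeping already set up in Remarks~\ref{rem:3.2}–\ref{rem:3.3} and Lemmas~\ref{lem:4.2}–\ref{lem:4.3} to pin these down, likely after a short search for a clean parametric witness analogous to those in \cite{https://doi.org/10.48550/arxiv.2211.01597}.
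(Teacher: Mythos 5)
Your overall strategy (decouple $\bm{b},\bm{c},\bm{d}$, then engineer the $2$-adic valuations of the three factors separately) is reasonable in outline, but the specific split you propose is impossible, and not merely for technical reasons. You want $D_{4}(\bm{b})\in 2^{8}\mathbb{Z}_{\rm odd}$, $D_{4}(\bm{c})$ odd, and $F(\bm{d})\in 2^{3}\mathbb{Z}_{\rm odd}$. Lemma~\ref{lem:2.5} says $D_{4}(\bm{b})\equiv D_{4}(\bm{c})\equiv F(\bm{d})\pmod{2}$, so the three factors always have the same parity; one of them cannot be odd while the others are even. Concretely, Remark~\ref{rem:2.3} forces $c_{i}\equiv b_{i}\pmod{2}$, so once $b_{0}+b_{2}\equiv b_{1}+b_{3}\equiv 1\pmod{2}$ the factor $(c_{0}+c_{2})^{2}-(c_{1}+c_{3})^{2}$ of $D_{4}(\bm{c})$ is already divisible by $8$. (Your step (3) is also internally inconsistent: setting $b_{0}+b_{2}=2^{k}u$ with $k\geq 1$ contradicts the parity $b_{0}+b_{2}\equiv 1\pmod 2$ you fixed at the outset, and by Lemmas~\ref{lem:4.2} and \ref{lem:4.3} the even case never lands in $2^{14}\mathbb{Z}_{\rm odd}$.) In the regime of Lemma~\ref{lem:4.4} the only admissible distribution is $2^{4}\cdot 2^{4}\cdot(2^{3})^{2}$: each of $D_{4}(\bm{b})$ and $D_{4}(\bm{c})$ contributes exactly $2^{4}$.

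The more serious gap is where you plan to put the prime $p$. You propose to realize $p$ through the indefinite factor $u^{2}-v^{2}$. But every odd integer is a difference of two squares, so that mechanism never uses the hypothesis $p\equiv -3\pmod{8}$; if it worked it would yield $2^{14}\cdot(\text{any odd})$, contradicting Lemma~\ref{lem:4.1}~(2) (for instance $2^{14}\cdot 7\notin S({\rm C}_{4}\rtimes{\rm C}_{4})$). The hypothesis $p\in P$ must enter through a \emph{sum}-of-two-squares factor: the paper writes $2p=(8k+3)^{2}+(8l+1)^{2}$ (possible precisely because $p\equiv 5\pmod{8}$) and places it in the definite factor $(c_{0}-c_{2})^{2}+(c_{1}-c_{3})^{2}$, giving $D_{4}(\bm{c})=8\cdot 2p=2^{4}p$; the free odd parameter is what goes into the difference-of-squares factor, via $(8m+3)^{2}-(8m+1)^{2}=8(4m+1)$ and $(8m-1)^{2}-(8m-3)^{2}=8(4m-1)$, which together cover all odd values $2m+1$. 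Your roles for the two binary forms are swapped. Finally, the integrality constraint for recovering $\bm{a}$ is finer than you state: one needs $d_{i}\equiv (b_{i}+c_{i})/2\pmod{2}$ for each $i$, not merely $d_{i}+d_{i+4}\equiv b_{i}\pmod{2}$. With these corrections the construction can be completed (the paper exhibits two explicit parametric $16$-tuples), but as written your plan cannot be carried out.
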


\begin{lem}\label{lem:5.4}
For any $m \in \mathbb{Z}$ and $q \in P'$, 
we have $2^{14} q^{2} (2 m + 1) \in S({\rm C}_{4}\rtimes {\rm C}_{4})$. 
\end{lem}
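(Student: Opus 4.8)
The plan is to prove Lemma~$\ref{lem:5.4}$ by exhibiting, for each $q \in P'$ and each $m \in \mathbb{Z}$, an explicit integer vector $\bm{a} = (a_{0}, \ldots, a_{15})$ whose group determinant $D_{G}(\bm{a}) = D_{4}(\bm{b}) D_{4}(\bm{c}) F(\bm{d})^{2}$ equals $2^{14} q^{2} (2m+1)$. Given the factorization we have at our disposal, the natural strategy is to split the target as a product of three pieces: arrange $F(\bm{d})^{2}$ to carry the factor $q^{2}$ together with a controlled power of $2$, and arrange $D_{4}(\bm{b}) D_{4}(\bm{c})$ to carry the remaining power of $2$ and the odd factor $2m+1$. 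Concretely, since $q \equiv -1 \pmod 4$, the single-variable quadratic form behind the $f_{k}$'s represents $q$ (indeed one of $n_{0}, n_{1}$ can be made to equal $\pm q$), so one first picks $\bm{d}$ realizing $F(\bm{d}) = 2^{3} \varepsilon q$ for a unit $\varepsilon$, using the case analysis already encoded in Lemma~$\ref{lem:4.3}$(2) (the branch $d \equiv 2 \pmod 4$). This forces $b_{0}+b_{2} \equiv b_{1}+b_{3} \equiv 1 \pmod 2$ via Remark~$\ref{rem:2.3}$, consistent with the $2^{8}\mathbb{Z}_{\mathrm{odd}}$ branch of Lemma~$\ref{lem:4.2}$(2), so $D_{4}(\bm{b})D_{4}(\bm{c}) \in 2^{8}\mathbb{Z}_{\mathrm{odd}}$; then $F(\bm{d})^{2} = 2^{6} q^{2}$ and $D_{G}(\bm{a}) = 2^{14} q^{2} \cdot (\text{odd})$, and it remains to hit the prescribed odd value $2m+1$.

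The key steps, in order, would be: (i) fix a base configuration of the $d_{i}$ for which $n_{0}$ (say) equals a small fixed odd number and $n_{1} = \pm q$ up to the factor $2^{3}$ coming from the parity pattern — this is a finite search among small representations of $q$ by $x^{2}+y^{2}-z^{2}-w^{2}$ or its sibling, which always succeeds because $q \equiv 3 \pmod 4$; (ii) having fixed $\bm{d}$, note that $a_{i}$ and $a_{i+8}$ enter only through $d_{i} = a_{i} - a_{i+8}$ in the $F$-factor while their sums $a_{i}+a_{i+8}$ feed $\bm{b}$ and $\bm{c}$ freely, so one has a full complement of free integer parameters to tune $D_{4}(\bm{b}) D_{4}(\bm{c})$; (iii) using the explicit product formula $D_{4}(x_{0},x_{1},x_{2},x_{3}) = \{(x_{0}+x_{2})^{2}-(x_{1}+x_{3})^{2}\}\{(x_{0}-x_{2})^{2}+(x_{1}-x_{3})^{2}\}$ recorded in the proof of Lemma~$\ref{lem:2.4}$, choose $\bm{b}$ so that $D_{4}(\bm{b}) = 2^{8} \cdot (2m+1)$ — e.g. take the sum-of-two-squares factor to be a power of $2$ times a unit and the difference-of-two-squares factor to absorb $2m+1$ — and choose $\bm{c}$ so that $D_{4}(\bm{c}) = \pm 1$; (iv) check the parity constraints $b_{0}+b_{2}\equiv b_{1}+b_{3}\equiv 1$, $c_{0}+c_{2}\equiv c_{1}+c_{3}\equiv 1 \pmod 2$ are met and then solve the linear system $a_{i}+a_{i+8} = (\text{prescribed})$, $a_{i}-a_{i+8} = d_{i}$ over $\mathbb{Z}$, adjusting base parities so the solution is integral.

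I would expect the main obstacle to be step (iii)–(iv): producing an odd factor of the exact form $2m+1$ for \emph{every} $m$ (both signs, and $0$) while keeping $D_{4}(\bm{c})$ a unit and while the residual powers of $2$ stay \emph{exactly} $2^{14}$ rather than a higher power. One must be careful that the choices do not accidentally push $D_{4}(\bm{b})D_{4}(\bm{c})$ into $2^{9}\mathbb{Z}$ (the other branch of Lemma~$\ref{lem:4.2}$(2)), which would require $(b_{0}+b_{2})(b_{1}+b_{3}) \equiv \pm 3$ and $(c_{0}+c_{2})(c_{1}+c_{3}) \equiv \pm 3 \pmod 8$ to both hold, a condition I would arrange to fail deliberately. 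A clean way around this is to use the simplest $D_{4}$ template, $D_{4}(n,0,0,0) = n^{2}$ and $D_{4}(n,1,0,0)$-type perturbations, to generate arbitrary odd values — or better, to first prove the sublemma that $\{D_{4}(x_{0},x_{1},x_{2},x_{3}) \mid x_{i}\in\mathbb{Z},\ x_{0}+x_{2}\equiv x_{1}+x_{3}\equiv 1 \pmod 2\} \supseteq 2^{8}\mathbb{Z}_{\mathrm{odd}}$ with the complementary factor controllable, likely already implicit in \cite{https://doi.org/10.48550/arxiv.2211.01597}. With that in hand the verification reduces to a single explicit numerical example plus a one-parameter family, which is the standard shape of these constructions. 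Parallel remarks handle Lemmas~$\ref{lem:5.1}$–$\ref{lem:5.3}$: for Lemma~$\ref{lem:5.3}$ one uses $p \equiv -3 \pmod 8$ entering through $D_{4}(\bm{b})D_{4}(\bm{c}) \in 2^{8}\mathbb{Z}_{\mathrm{odd}}$ via the $(b_{0}-b_{2})^{2}+(b_{1}-b_{3})^{2} \equiv 10 \pmod{16}$ mechanism seen in the proof of Lemma~$\ref{lem:4.4}$, and the $2^{15}$ and $2^{16}$ statements in Lemma~$\ref{lem:5.1}$ follow from the even branches of Lemmas~$\ref{lem:4.2}$–$\ref{lem:4.3}$.
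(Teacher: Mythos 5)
Your overall architecture coincides with the paper's: realize $2^{14}q^{2}(2m+1)$ as $D_{4}(\bm{b})D_{4}(\bm{c})F(\bm{d})^{2}$ with $F(\bm{d})=\pm 2^{3}q$ and $D_{4}(\bm{b})D_{4}(\bm{c})=2^{8}\cdot(\text{odd})$. But the concrete allocation in your step (iii) cannot be carried out, and this is the heart of the construction rather than a detail. To land in $2^{14}\mathbb{Z}_{\rm odd}$ you are forced (Lemmas~\ref{lem:4.2} and \ref{lem:4.3}) into the branch $b_{0}+b_{2}\equiv b_{1}+b_{3}\equiv 1\pmod{2}$, and Remark~\ref{rem:2.3} then gives $c_{0}+c_{2}\equiv c_{1}+c_{3}\equiv 1\pmod{2}$ as well. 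Hence in $D_{4}(\bm{c})=\{(c_{0}+c_{2})^{2}-(c_{1}+c_{3})^{2}\}\{(c_{0}-c_{2})^{2}+(c_{1}-c_{3})^{2}\}$ the first factor is a difference of two odd squares (divisible by $8$) and the second a sum of two odd squares ($\equiv 2\bmod 4$), so $16\mid D_{4}(\bm{c})$ and $D_{4}(\bm{c})=\pm 1$ is impossible. With your split $D_{4}(\bm{b})=2^{8}(2m+1)$, $D_{4}(\bm{c})=\pm1$, the $2$-adic valuation of $D_{G}(\bm{a})$ would be at least $8+4+6=18$, overshooting $2^{14}$. The only viable split puts $D_{4}(\bm{b})$ and $D_{4}(\bm{c})$ each in $2^{4}\mathbb{Z}_{\rm odd}$; the paper takes $D_{4}(\bm{c})=16$ and $D_{4}(\bm{b})=2\{(8m+3)^{2}-(8m+1)^{2}\}=16(4m+1)$, resp.\ $16(4m-1)$, using two separate explicit families to cover both residues of $2m+1$ modulo $4$ --- a case split your one-parameter plan does not address.

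Two further slips. First, in the branch you need ($F(\bm{d})\in 2^{3}\mathbb{Z}_{\rm odd}$) the proof of Lemma~\ref{lem:4.3}(2) forces $n_{0}\equiv 2\pmod{4}$ and $n_{1}\equiv 4\pmod{8}$, so $n_{0}$ cannot be ``a small fixed odd number'' and neither $n_{0}$ nor $n_{1}$ can equal $\pm q$; the paper instead uses a representation $2q=(4r+1)^{2}+(4s)^{2}+(4t+1)^{2}+(4u+2)^{2}$ (available since $2q\equiv 6\pmod 8$) to get $n_{0}=2q$, $n_{1}=-4$, hence $F(\bm{d})^{2}=2^{6}q^{2}$. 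Second, step (iv) understates the integrality constraints: recovering the $a_{i}$ requires not only $b_{i}\equiv c_{i}\pmod 2$ but also $(b_{i}+c_{i})/2\equiv d_{i}$ and $(b_{i}-c_{i})/2\equiv d_{i+4}\pmod 2$ for all $i$, compatibility conditions between your independently chosen $\bm{b},\bm{c},\bm{d}$ that must actually be verified. The paper avoids all of this by exhibiting the sixteen $a_{i}$ explicitly. So the proposal has the right decomposition in outline, but its key quantitative step fails as stated.
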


\begin{proof}[Proof of Lemma~$\ref{lem:5.1}$]
We obtain (1) from 
\begin{align*}
D_{G}(m + 1, m, m, \ldots, m) 
&= D_{4 \times 2}(2 m + 1, 2 m, 2 m, \ldots, 2 m) F(1, 0, 0, \ldots, 0)^{2} \\ 
&= D_{4}(4 m + 1, 4 m, 4 m, 4 m) D_{4}(1, 0, 0, 0) f_{0}(1, 0, 0, 0)^{2} f_{1}(1, 0, 0, 0)^{2} \\ 
&= (8 m + 1)^{2} - (8 m)^{2} \\ 
&= 16 m + 1. 
\end{align*}
We obtain (2) from 
\begin{align*}
&D_{G}(m + 1, m + 1, m + 1, m + 1, m, m + 1, m + 1, m, m + 1, m, m + 1, m, m, m, m, m) \\ 
&\quad = D_{4 \times 2}(2 m + 2, 2 m + 1, 2 m + 2, 2 m + 1, 2 m, 2 m + 1, 2 m + 1, 2 m) F(0, 1, 0, 1, 0, 1, 1, 0)^{2} \\ 
&\quad = D_{4}(4 m + 2, 4 m + 2, 4 m + 3, 4 m + 1) D_{4}(2, 0, 1, 1) f_{0}(0, - 1, 0, 1)^{2} f_{1}(0, 1, 2, 1)^{2} \\ 
&\quad = 2 \left\{ (8 m + 5)^{2} - (8 m + 3)^{2} \right\} \cdot 16 \cdot 2^{2} \cdot (- 4)^{2} \\ 
&\quad = 2^{15} (2 m + 1). 
\end{align*}
We obtain (3) from 
\begin{align*}
&D_{G}(m + 1, m, m + 1, m, m, m, m, m, m, m, m, m, m - 1, m , m, m - 1) \\ 
&\quad = D_{4 \times 2}(2 m + 1, 2 m, 2 m + 1, 2 m, 2 m - 1, 2 m, 2 m, 2 m - 1) F(1, 0, 1, 0, 1, 0, 0, 1)^{2} \\ 
&\quad = D_{4}(4 m, 4 m, 4 m + 1, 4 m - 1) D_{4}(2, 0, 1, 1) f_{0}(0, 1, 0, - 1)^{2} f_{1}(2, 1, 0, 1)^{2} \\ 
&\quad = 2 \left\{ (8 m + 1)^{2} - (8 m - 1)^{2} \right\} \cdot 16 \cdot 2^{2} \cdot 4^{2} \\ 
&\quad = 2^{16} m. 
\end{align*}
\end{proof}

\begin{proof}[Proof of Lemma~$\ref{lem:5.2}$]
Note that the following hold: 
\begin{enumerate}
\item[(i)] For any $16 l - 3 \in P$, there exist $r, s \in \mathbb{Z}$ satisfying $16 l - 3 = (8 r + 3)^{2} + (8 s + 2)^{2}$; 
\item[(ii)] For any $16 l + 5 \in P$, there exist $r, s \in \mathbb{Z}$ satisfying $16 l + 5 = (8 r + 1)^{2} + (8 s + 2)^{2}$; 
\item[(iii)] For any $m \geq 1$, there exist $t_, u, v, w \in \mathbb{Z}$ satisfying 
$$
8 m - 1 = (4 t + 1)^{2} + (4 u + 1)^{2} + (4 v + 1)^{2} + (4 w + 2)^{2}; 
$$
\item[(iv)] For any $m \geq 1$, there exist $t, u, v, w \in \mathbb{Z}$ satisfying 
$$
8 m + 3 = (4 t + 1)^{2} + (4 u + 1)^{2} + (4 v + 1)^{2} + (4 w)^{2}. 
$$
\end{enumerate}
Let 
\begin{align*}
a_{0} &= k + r + w + 1, & a_{1} &= k + s + u, & a_{2} &= k - r - w - 1, & a_{3} &= k - s - u - 1, \\ 
a_{4} &= k + r + v, & a_{5} &= k + s + t, & a_{6} &= k - r - v - 1, & a_{7} &= k - s - t - 1, \\ 
a_{8} &= k + r - w, & a_{9} &= k + s - u, & a_{10} &= k - r + w, & a_{11} &= k - s + u, \\ 
a_{12} &= k + r - v, & a_{13} &= k + s - t, & a_{14} &= k - r + v, & a_{15} &= k - s + t. 
\end{align*}
Then we obtain (1) from (i), (iii) and 
\begin{align*}
D_{G}(\bm{a}) = (16 k - 3) \left\{ (8 r + 3)^{2} + (8 s + 2)^{2} \right\} \left\{ (4 w + 2)^{2} + (4 v + 1)^{2} + (4 u + 1)^{2} + (4t + 1)^{2} \right\}^{2}. 
\end{align*}
Let 
\begin{align*}
a_{0} &= k - r + w + 1, & a_{1} &= k - s + u, & a_{2} &= k + r - w, & a_{3} &= k + s - u, \\ 
a_{4} &= k - r + v, & a_{5} &= k - s + t, & a_{6} &= k + r - v, & a_{7} &= k + s - t, \\ 
a_{8} &= k - r - w, & a_{9} &= k - s - u, & a_{10} &= k + r + w + 1, & a_{11} &= k + s + u + 1, \\ 
a_{12} &= k - r - v, & a_{13} &= k - s - t, & a_{14} &= k + r + v + 1, & a_{15} &= k + s + t + 1. 
\end{align*}
Then we obtain (2) from (ii), (iii) and 
\begin{align*}
D_{G}(\bm{a}) = (16 k + 5) \left\{ (8 r + 1)^{2} + (8 s + 2)^{2} \right\} \left\{ (4 w + 2)^{2} + (4 v + 1)^{2} + (4 u + 1)^{2} + (4t + 1)^{2} \right\}^{2}. 
\end{align*}
Let 
\begin{align*}
a_{0} &= k + r + w + 1, & a_{1} &= k + s + u + 1, & a_{2} &= k - r - w, & a_{3} &= k - s - u, \\ 
a_{4} &= k + r + v + 1, & a_{5} &= k + s + t + 1, & a_{6} &= k - r - v, & a_{7} &= k - s - t, \\ 
a_{8} &= k + r - w + 1, & a_{9} &= k + s - u, & a_{10} &= k - r + w, & a_{11} &= k - s + u, \\ 
a_{12} &= k + r - v, & a_{13} &= k + s - t, & a_{14} &= k - r + v, & a_{15} &= k - s + t. 
\end{align*}
Then we obtain (3) from (i), (iv) and 
\begin{align*}
D_{G}(\bm{a}) &= (16 k + 5) \left\{ (8 r + 3)^{2} + (8 s + 2)^{2} \right\} \left\{ (4 w)^{2} + (4 v + 1)^{2} + (4 u + 1)^{2} + (4t + 1)^{2} \right\}^{2}. 
\end{align*}
Let 
\begin{align*}
a_{0} &= k - r + w, & a_{1} &= k - s + u, & a_{2} &= k + r - w, & a_{3} &= k + s - u, \\ 
a_{4} &= k - r + v, & a_{5} &= k - s + t, & a_{6} &= k + r - v, & a_{7} &= k + s - t, \\ 
a_{8} &= k - r - w, & a_{9} &= k - s - u - 1, & a_{10} &= k + r + w, & a_{11} &= k + s + u, \\ 
a_{12} &= k - r - v - 1, & a_{13} &= k - s - t - 1, & a_{14} &= k + r + v, & a_{15} &= k + s + t. 
\end{align*}
Then we obtain (4) from (ii), (iv) and 
\begin{align*}
D_{G}(\bm{a}) &= (16 k - 3) \left\{ (8 r + 1)^{2} + (8 s + 2)^{2} \right\} \left\{ (4 w)^{2} + (4 v + 1)^{2} + (4 u + 1)^{2} + (4t + 1)^{2} \right\}^{2}. 
\end{align*}
\end{proof}

\begin{proof}[Proof of Lemma~$\ref{lem:5.3}$]
For any $p \in P$, there exist $k, l \in \mathbb{Z}$ satisfying $2 p = (8 k + 3)^{2} + (8 l + 1)^{2}$. 
Let 
\begin{align*}
a_{0} &= m + k + 1, & a_{1} &= m + l, & a_{2} &= m - k, & a_{3} &= m - l, \\ 
a_{4} &= m - k + 1, & a_{5} &= m - l, & a_{6} &= m + k + 1, & a_{7} &= m + l + 1, \\ 
a_{8} &= m + k, & a_{9} &= m + l, & a_{10} &= m - k - 1, & a_{11} &= m - l, \\ 
a_{12} &= m - k, & a_{13} &= m - l, & a_{14} &= m + k + 1, & a_{15} &= m + l. 
\end{align*}
Then we have 
\begin{align*}
D_{G}(\bm{a}) = 2 \left\{ (8 m + 3)^{2} - (8 m + 1)^{2} \right\} \cdot 8 \left\{ (8 k + 3)^{2} + (8 l + 1)^{2} \right\} \cdot 2^{2} \cdot 4^{2} = 2^{14} p (4 m + 1). 
\end{align*}
Let 
\begin{align*}
a_{0} &= m + k, & a_{1} &= m + l, & a_{2} &= m - k - 1, & a_{3} &= m - l, \\ 
a_{4} &= m - k, & a_{5} &= m - l, & a_{6} &= m + k + 1, & a_{7} &= m + l, \\ 
a_{8} &= m + k, & a_{9} &= m + l - 1, & a_{10} &= m - k - 1, & a_{11} &= m - l - 1, \\ 
a_{12} &= m - k, & a_{13} &= m - l - 1, & a_{14} &= m + k, & a_{15} &= m + l. 
\end{align*}
Then we have 
\begin{align*}
D_{G}(\bm{a}) = 2 \left\{ (8 m - 1)^{2} - (8 m - 3)^{2} \right\} \cdot 8 \left\{ (8 k + 3)^{2} + (8 l + 1)^{2} \right\} \cdot 2^{2} \cdot (- 4)^{2} = 2^{14} p (4 m - 1). 
\end{align*}
\end{proof}

\begin{proof}[Proof of Lemma~$\ref{lem:5.4}$]
For any $q \in P'$, there exist $r, s, t, u \in \mathbb{Z}$ satisfying $2 q = (4 r + 1)^{2} + (4 s)^{2} + (4 t + 1)^{2} + (4 u + 2)^{2}$. 
Let 
\begin{align*}
a_{0} &= m + r + 1, & a_{1} &= m + t, & a_{2} &= m - r, & a_{3} &= m - t, \\ 
a_{4} &= m + s, & a_{5} &= m + u + 1, & a_{6} &= m - s, & a_{7} &= m - u, \\ 
a_{8} &= m - r + 1, & a_{9} &= m - t, & a_{10} &= m + r + 1, & a_{11} &= m + t + 1, \\ 
a_{12} &= m - s, & a_{13} &= m - u - 1, & a_{14} &= m + s, & a_{15} &= m + u. 
\end{align*}
Then we have 
\begin{align*}
D_{G}(\bm{a}) = 2 \left\{ (8 m + 3)^{2} - (8 m + 1)^{2} \right\} \cdot 16 \cdot (2 q)^{2} \cdot (- 4)^{2} = 2^{14} q^{2} (4 m + 1). 
\end{align*}
Let 
\begin{align*}
a_{0} &= m + r + 1, & a_{1} &= m + t, & a_{2} &= m - r, & a_{3} &= m - t, \\ 
a_{4} &= m + s, & a_{5} &= m + u, & a_{6} &= m - s, & a_{7} &= m - u - 1, \\ 
a_{8} &= m - r, & a_{9} &= m - t - 1, & a_{10} &= m + r, & a_{11} &= m + t, \\ 
a_{12} &= m - s - 1, & a_{13} &= m - u - 1, & a_{14} &= m + s - 1, & a_{15} &= m + u. 
\end{align*}
Then we have 
\begin{align*}
D_{G}(\bm{a}) = 2 \left\{ (8 m - 1)^{2} - (8 m - 3)^{2} \right\} \cdot 16 \cdot (2 q)^{2} \cdot 4^{2} = 2^{14} q^{2} (4 m - 1). 
\end{align*}
\end{proof}

From Lemmas~$\ref{lem:3.1}$, $\ref{lem:4.1}$ and $\ref{lem:5.1}$--$\ref{lem:5.4}$, 
Theorem~$\ref{thm:1.1}$ is proved.

\clearpage

\bibliography{reference}
\bibliographystyle{plain}

\medskip
\begin{flushleft}
Faculty of Education, 
University of Miyazaki, 
1-1 Gakuen Kibanadai-nishi, 
Miyazaki 889-2192, 
Japan \\ 
{\it Email address}, Yuka Yamaguchi: y-yamaguchi@cc.miyazaki-u.ac.jp \\ 
{\it Email address}, Naoya Yamaguchi: n-yamaguchi@cc.miyazaki-u.ac.jp 
\end{flushleft}

\end{document}